\numberwithin{equation}{section}
\numberwithin{figure}{section}
\theoremstyle{plain}
\newtheorem{thm}{\protect\theoremname}[section]
\theoremstyle{remark}
\newtheorem{rem}[thm]{\protect\remarkname}
\theoremstyle{definition}
\newtheorem{construction}[thm]{\protect\constructionname}
\theoremstyle{plain}
\newtheorem{prop}[thm]{\protect\propositionname}
\theoremstyle{definition}
\newtheorem{defn}[thm]{\protect\definitionname}
\theoremstyle{plain}
\newtheorem{lem}[thm]{\protect\lemmaname}
\theoremstyle{plain}
\newtheorem{cor}[thm]{\protect\corollaryname}
\theoremstyle{definition}
\newtheorem{warning}[thm]{\protect\warningname}
\theoremstyle{definition}
\newtheorem{example}[thm]{\protect\examplename}
\providecommand{\constructionname}{Construction}
\providecommand{\corollaryname}{Corollary}
\providecommand{\definitionname}{Definition}
\providecommand{\examplename}{Example}
\providecommand{\lemmaname}{Lemma}
\providecommand{\propositionname}{Proposition}
\providecommand{\remarkname}{Remark}
\providecommand{\theoremname}{Theorem}
\providecommand{\warningname}{Warning}
\begin{document}
\global\long\def\sf#1{\mathsf{#1}}%

\global\long\def\cal#1{\mathcal{#1}}%

\global\long\def\bb#1{\mathbb{#1}}%

\global\long\def\bf#1{\mathbf{#1}}%

\global\long\def\frak#1{\mathfrak{#1}}%

\global\long\def\fr#1{\mathfrak{#1}}%

\global\long\def\inp#1{\left\langle #1\right\rangle }%

\global\long\def\pr#1{\left(#1\right)}%

\global\long\def\norm#1{\left\Vert #1\right\Vert }%

\global\long\def\hat#1{\widehat{#1}}%

\global\long\def\opn#1{\operatorname{#1}}%

\global\long\def\Set{\sf{Set}}%

\global\long\def\SS{\sf{sSet}}%

\global\long\def\Cat{\mathcal{C}\sf{at}}%

\global\long\def\Fib{\mathcal{F}\mathsf{ib}}%

\global\long\def\Bund{\mathcal{B}\mathsf{und}}%

\global\long\def\Op{\mathcal{O}\mathsf{p}}%

\global\long\def\Mfld{\cal M\mathsf{fld}}%

\global\long\def\Disk{\cal D\mathsf{isk}}%

\global\long\def\Fin{\sf{Fin}}%

\global\long\def\ot{\leftarrow}%

\global\long\def\Hom{\operatorname{Hom}}%

\global\long\def\lim{\operatorname{lim}}%

\global\long\def\colim{\operatorname{colim}}%

\global\long\def\Map{\operatorname{Map}}%

\global\long\def\Sing{\operatorname{Sing}}%

\global\long\def\Fun{\operatorname{Fun}}%

\global\long\def\Emb{\operatorname{Emb}}%

\global\long\def\Open{\operatorname{Open}}%

\global\long\def\Homeo{\operatorname{Homeo}}%

\global\long\def\Alg{\operatorname{Alg}}%

\global\long\def\id{\mathrm{id}}%

\global\long\def\act{\mathrm{act}}%

\global\long\def\Un{\opn{Un}}%

\global\long\def\op{\mathrm{op}}%

\global\long\def\Comm{\mathrm{Comm}}%

\global\long\def\Top{\mathrm{Top}}%

\global\long\def\t{\otimes}%

\global\long\def\rcone{\triangleright}%

\global\long\def\lcone{\triangleleft}%

\global\long\def\p{\prime}%

\title{Universal Properties of Variations of the Little Cubes Operads}
\author{Kensuke Arakawa}
\begin{abstract}
Given a map $B\to B\Top\pr n$ of spaces, one can define a version
$\bb E_{B}$ of the little cubes operad, whose construction is due
to Lurie. We show that $\bb E_{B}$ enjoys the universal property
that, for every $\infty$-operad $\cal O$, an operad map $\bb E_{B}\to\cal O$
is equivalent to a $\Top\pr n$-equivariant map $B\times_{B\Top\pr n}E\Top\pr n\to\Map\pr{\bb E_{n},\cal O}$.
This gives us an explicit diagram exhibiting $\bb E_{B}$ as a colimit
of $\bb E_{n}$ parametrized by $B$. It also shows that locally constant
factorization algebras satisfy descent, reproving a recent theorem
of Matsuoka.
\end{abstract}

\address{Department of Mathematics, Kyoto University, Kyoto, 606-8502, Japan}
\email{arakawa.kensuke.22c@st.kyoto-u.ac.jp}
\keywords{$\infty$-operads, little cubes, Kan extensions}
\subjclass[2020]{18M60, 18N60, 55U40}
\maketitle

\section{\label{sec:intro}Introduction}

The operad of little $n$-cubes governs homotopy coherent multiplications
by using rectilinear embeddings of cubes. They were first introduced
in the works of Boardman--Vogt \cite{BV73} and May \cite{MayOperad}
to study algebraic structures of iterated loop spaces, and since then,
they have repeatedly appeared in many contexts, from mathematical
physics to embedding calculus. (See \cite{Benoit20} for a survey.) 

As contexts shifted, various modifications of little $n$-cubes operads
emerged. The framed little $n$-disks operad $f\cal D_{n}$ \cite{Getz94, Wahl01},
in which we replace cubes with disks and allow rotations of disks,
is one such example. To unify these variations, Markl and Wahl independently
arrived at the notion of \textit{semidirect products} of operads \cite{Markl99, Wahl01}:
Given a topological operad $\cal O$ equipped with a (left) action
of a topological monoid $M$, the semidirect product $M\ltimes\cal O$
is defined by setting $\pr{M\ltimes\cal O}\pr k=M^{k}\times\cal O\pr k$,
with obvious structure maps. For example, the special orthogonal group
${\rm SO}\pr n$ acts on the operad $\cal D_{n}$ of little $n$-cubes
by moving the centers of the small disks (with their radii fixed)
by using the action of ${\rm SO}\pr n$ on $\bb R^{n}$, and the resulting
operad ${\rm SO}\pr n\ltimes\cal D_{n}$ is nothing but the framed
little $n$-disks operad. If we want to be more restrictive on the
class of embeddings, we can just choose a subgroup $H\subset{\rm SO}\pr n$
(or more generally, a group homomorphism $H\to{\rm SO}\pr n$) and
form the semidirect product $H\ltimes\cal D_{n}$. 

In \cite{DAGVI}, Lurie introduced an $\infty$-operadic analog of
these variations, which we now recall.
\begin{rem}
The rest of this paper relies heavily on the theory of $\infty$-categories
and $\infty$-operads. However, for the most part, one can get the
idea of this section just by replacing these gadgets with topological
categories and colored topological operads and by replacing various
$\infty$-categorical constructions (limits and colimits, Kan extensions,
sheaves, etc) by the classical derived constructions (homotopy limits
and homotopy colimits, homotopy Kan extensions, homotopy sheaves,
etc). Many $\infty$-operads have $\otimes$ (and sometimes $\amalg$)
in their exponents, but this is just a notational convention. Remark
\ref{rem:main} might also help to understand the main result.
\end{rem}

\begin{construction}
Given a map $B\to B{\rm SO}\pr n$ of spaces, we define an $\infty$-operad
$\bb E_{B}^{\t}$ to be the pullback of the diagram
\[
N\pr{f\cal D_{n}}^{\t}\to B{\rm SO}\pr n^{\amalg}\ot B^{\amalg}
\]
in the $\infty$-category $\Op_{\infty}$ of $\infty$-operads, where:
\begin{itemize}
\item $N\pr -^{\t}$ denotes the operadic nerve functor, which converts
a simplicial operad into an $\infty$-operad;
\item $B^{\amalg}$ denotes the colimit of the constant diagram $\Comm^{\t}:B\to\Op_{\infty}$
at the commutative $\infty$-operad.\footnote{This description of $B^{\amalg}$ is not immediate from Lurie's definition,
but we will see that it is correct (Lemma \ref{lem:key}).} In the case where $B=B{\rm SO}\pr n$, a direct computation shows
that $N\pr{{\rm SO}\pr n\ltimes\Comm}^{\t}=B{\rm SO}\pr n^{\amalg}$,
and the left hand map is induced by the map $f\cal D_{n}={\rm SO}\pr n\ltimes\cal D_{n}\to{\rm SO}\pr n\ltimes\Comm$.
\end{itemize}
In fact, Lurie's definition is more general, as he defines the $\infty$-operad
$\bb E_{B}^{\t}$ for any map $B\to B\Top\pr n$ of spaces. Instead
of spelling out the details here, we refer the readers to Subsection
\ref{subsec:def}.
\end{construction}

For example, given a homomorphism $H\to{\rm SO}\pr n$ of topological
groups, the $\infty$-operad $\bb E_{BH}^{\t}$ is equivalent to $N\pr{H\ltimes\cal D_{n}}^{\t}$. 

Notice that the above construction differs from the classical one
in one crucial respect: Lurie's definition starts with the framed
$n$-disks operad, instead of the action of ${\rm SO}\pr n$ on the
little $n$-disks operads. The reason for this is that the operadic
nerve functor $N\pr -^{\t}$ is not enriched (at least not in an obvious
way), so it is difficult to construct a diagram $B{\rm SO}\pr n\to\Op_{\infty}$
encoding the action of ${\rm SO}\pr n$ on $N\pr{\cal D_{n}}^{\t}$.

Now the classical notion of semidirect products of groups is a special
case of the Grothendieck construction, which in turn is a special
case of colimits. This suggests, as Lurie himself remarked informally
in \cite[Remark 3.1.10]{DAGVI}, that the $\infty$-operad $\bb E_{B}^{\t}$
is the colimit of some diagram $B\to\Op_{\infty}$ exhibiting an action
of $B$ on $\bb E_{n}^{\t}$. This is indeed true, although formal
proofs had not appeared until quite recently (see, e.g., \cite[Proposition 2.2]{HKK22}
and \cite[Remark 2.2]{DHLSW23}). 

However, for the exact same reason as we had to start with the semidirect
product and not from the action, it seems difficult to pinpoint exactly
\textit{which} diagram's colimit $\bb E_{B}^{\t}$ is. Because of
this, the description of $\bb E_{B}$-algebras has always been opaque,
camouflaged with phrases like ``$\bb E_{n}$-algebras with coherent
actions of $B$.'' In this paper, we improve this situation by constructing
an explicit diagram $B\to\Op_{\infty}$ whose colimit is $\bb E_{B}^{\t}$,
thereby making the universal property of $\bb E_{B}^{\t}$ lucid.
We will achieve this by showing that the functor $\bb E_{\bullet}^{\t}$
(not just each $\bb E_{B}^{\t}$) enjoys a certain universal property.

As a motivation, consider the universal case where $B=B\Top\pr n$.
Our task is to find an $\infty$-operad which is equivalent to $\bb E_{n}^{\t}$
and admits a $\Top\pr n$-action. This might be difficult if we are
stuck with the picture of little disks, but there is in fact a canonical
one: Consider the classifying map $\bb R^{n}\to B\Top\pr n$ of the
tangent microbundle. Since the classifying maps of tangent microbundles
are compatible with embeddings, the object $\bb R^{n}\in\cal S_{/B\Top\pr n}$
has a $\Top\pr n$-action. Since the construction of $\bb E_{\bullet}^{\t}$
is functorial, the $\infty$-operad $\bb E_{\bb R^{n}}^{\t}$ inherits
a $\Top\pr n$-action. On the other hand, since $\bb R^{n}$ is contractible,
the $\infty$-operad $\bb E_{\bb R^{n}}^{\t}$ is equivalent to $\bb E_{{\rm pt}}^{\t}\simeq\bb E_{n}^{\t}$.
Thus we obtain an $\infty$-operad equivalent to $\bb E_{n}^{\t}$,
equipped with a $\Top\pr n$-action. We will confirm that this is
the action we were after:
\begin{prop}
\label{prop:intro}For any map $B\to B\Top\pr n$ of spaces, the $\infty$-operad
$\bb E_{B}^{\t}$ is the colimit of the composite
\[
B\to B\Top\pr n\xrightarrow{\bb R^{n}}\cal S_{/B\Top\pr n}\xrightarrow{\bb E_{\bullet}^{\t}}\Op_{\infty}.
\]
\end{prop}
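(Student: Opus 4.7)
The plan is to separate the statement into two independent claims. First, I would show that in $\cal S_{/B\Top\pr n}$, the object $(B\to B\Top\pr n)$ is canonically the colimit of the composite $B\to B\Top\pr n\xrightarrow{\bb R^n}\cal S_{/B\Top\pr n}$. Second, I would show that the functor $\bb E_\bullet^\t:\cal S_{/B\Top\pr n}\to\Op_\infty$ preserves this colimit. Combining the two immediately gives the proposition:
\[
\colim\bigl(B\to\cal S_{/B\Top\pr n}\xrightarrow{\bb E_\bullet^\t}\Op_\infty\bigr)\simeq\bb E_B^\t.
\]

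The first claim is a routine calculation, since the forgetful functor $\cal S_{/B\Top\pr n}\to\cal S$ creates colimits. The underlying diagram $B\to\cal S$ takes every vertex to the contractible space $\bb R^n$, so the colimit of the underlying diagram is $B$ itself. The induced structure map $B\to B\Top\pr n$ is then computed using that each tangent microbundle classifying map $\bb R^n\to B\Top\pr n$ factors, up to contractible choice, through the point of $B\Top\pr n$ to which the corresponding vertex of $B$ is sent; the universal property therefore forces the induced map out of the colimit to be the given structure map.

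The second claim is the main obstacle, because cocontinuity of $\bb E_\bullet^\t$ is not apparent from Lurie's pullback definition $\bb E_B^\t=N\pr{f\cal D_n}^\t\times_{B\Top\pr n^\amalg}B^\amalg$. My plan is to unpack this pullback term by term: the functor $B\mapsto B^\amalg$ from $\cal S_{/B\Top\pr n}$ to $\Op_\infty^{/B\Top\pr n^\amalg}$ is a left adjoint and hence cocontinuous, and pullback along the fixed cocartesian fibration $N\pr{f\cal D_n}^\t\to B\Top\pr n^\amalg$ preserves space-indexed colimits by a descent-type argument for cocartesian fibrations of $\infty$-operads. Alternatively, and perhaps more cleanly, one might exhibit $\bb E_\bullet^\t$ as the left Kan extension along $B\Top\pr n\xrightarrow{\bb R^n}\cal S_{/B\Top\pr n}$ of its restriction $B\Top\pr n\to\Op_\infty$, which picks out $\bb E_n^\t$ with its canonical $\Top\pr n$-action. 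Cocontinuity would then be automatic, and the proposition would read off directly from the pointwise formula for the Kan extension.
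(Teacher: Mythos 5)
Your top-level decomposition is essentially the paper's own: Proposition \ref{prop:intro} is deduced there from Theorem \ref{thm:main} by identifying $B$ with $B\Top\pr n_{/X}$ (your first claim, whose sketch via the forgetful functor and the contractibility of the values is fine, and which the paper handles with the Yoneda lemma and the colimit formula for pointwise Kan extensions) and then using that $\bb E_{\bullet}^{\t}$ is left Kan extended from $B\Top\pr n$ (your second claim). The problem is that the second claim is not a step you can wave through: since $\cal S_{/B\Top\pr n}\simeq\Fun\pr{B\Top\pr n^{\op},\cal S}$ and the essential image of the Yoneda embedding consists of the contractible objects, "$\bb E_{\bullet}^{\t}$ preserves this colimit for every $B$'' and "$\bb E_{\bullet}^{\t}$ is a left Kan extension of its restriction along Yoneda'' are the same statement, namely Theorem \ref{thm:main}. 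So your route (b) is circular, and route (a) must carry the entire weight.

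In route (a), the cocontinuity of $\pr -^{\amalg}$ on spaces is the easier half (it can be extracted from the description of algebras over $\cal C^{\amalg}$ in \cite[\S 2.4.3]{HA}, and it is the non-universal part of Lemma \ref{lem:key}); the genuine gap is the assertion that pulling back along $\bb E_{B\Top\pr n}^{\t}\to B\Top\pr n^{\amalg}$ preserves the resulting $B$-indexed colimit. Colimits in $\Op_{\infty}$ are not universal ($\Op_{\infty}$ is not an $\infty$-topos), so there is no general descent principle to invoke; the map in question is a fibration of $\infty$-operads, not in any evident sense a cocartesian fibration, and even flatness of the underlying inner fibration only gives compatibility with categorical equivalences, whereas colimits in $\Op_{\infty}$ are computed by assembling a family of $\infty$-operads, i.e., by a fibrant replacement in the model category $\SS_{/\frak{Op}}^{+}$, with which pullback does not commute in general. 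This is exactly why the paper proves the stronger statement that the colimit diagrams produced by $\pr -^{\amalg}$ are \emph{universal} colimit diagrams (Lemma \ref{lem:key}), via the refraction-map criterion of Proposition \ref{prop:univ_colim_criterion} and the assembly criterion of Proposition \ref{prop:universal_assembly}, which occupies all of Section \ref{sec:lemma}. As written, your "descent-type argument'' replaces precisely this step with a phrase, so the proposal does not yet constitute a proof.
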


We can interpret Proposition \ref{prop:intro} as describing the \textit{local
}universal property of $\bb E_{B}^{\t}$, in the sense that $B$ is
fixed. We will deduce this local property from the following \textit{global
}universal property of $\bb E_{\bullet}^{\t}$, which is the main
result of this paper:
\begin{thm}
[Theorem \ref{thm:main}]\label{thm:intro1}The diagram 
\[\begin{tikzcd}[row sep = small]
	{\operatorname{Fun}(B\mathrm{Top}(n)^{\mathrm{op}},\mathcal{S})} && {\mathcal{O}\mathsf{p}_\infty} \\
	& {\mathcal{S}_{/B\mathrm{Top}(n)}}
	\arrow["{-\otimes_{\mathrm{Top}(n)} \mathbb{E}_{\mathbb{R}^n}^\otimes}", from=1-1, to=1-3]
	\arrow["\simeq"', from=1-1, to=2-2]
	\arrow["{\mathbb{E}_\bullet^\otimes }"', from=2-2, to=1-3]
\end{tikzcd}\]commutes up to natural equivalence, where the left slanted arrow is
the straightening--unstraightening equivalence and $-\otimes_{\Top\pr n}\bb E_{\bb R^{n}}^{\t}$
is the left Kan extension of the functor $\bb E_{\bb R^{n}}^{\t}:B\Top\pr n\to\Op_{\infty}$
along the Yoneda embedding. 
\end{thm}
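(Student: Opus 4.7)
The plan is to invoke the universal property of left Kan extension along the Yoneda embedding. Composing the top arrow of the diagram with the straightening--unstraightening equivalence produces a cocontinuous functor $L : \cal S_{/B\Top\pr n} \to \Op_\infty$, and $L$ is characterized up to equivalence as the essentially unique colimit-preserving extension of $\bb E_{\bb R^n}^{\t} : B\Top\pr n \to \Op_\infty$ along the Yoneda embedding $y : B\Top\pr n \to \cal S_{/B\Top\pr n}$. It therefore suffices to verify
\begin{enumerate}
\item[(a)] the functor $\bb E_{\bullet}^{\t} : \cal S_{/B\Top\pr n} \to \Op_\infty$ preserves colimits, and
\item[(b)] the composite $\bb E_{\bullet}^{\t} \circ y$ is naturally equivalent to $\bb E_{\bb R^n}^{\t}$.
\end{enumerate}

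For (b) I would unwind definitions. Under straightening--unstraightening, $y$ sends $b \in B\Top\pr n$ to the basepoint inclusion $\ast \to B\Top\pr n$ based at $b$, with monodromy given by the tautological equivalence $\Top\pr n \simeq \Omega_{\ast} B\Top\pr n$. By the discussion in the introduction, $\bb E_{\bb R^n}^{\t}$ is itself the composite $B\Top\pr n \xrightarrow{\bb R^n} \cal S_{/B\Top\pr n} \xrightarrow{\bb E_{\bullet}^{\t}} \Op_\infty$, where the first arrow picks out the tangent microbundle classifying map $\bb R^n \to B\Top\pr n$ equipped with its canonical $\Top\pr n$-equivariance. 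Since $\bb R^n$ is contractible, this first arrow is equivalent, as a functor $B\Top\pr n \to \cal S_{/B\Top\pr n}$, to the Yoneda embedding $y$; the comparison of actions reduces to the standard identification of the $\Top\pr n$-action on $\bb R^n$ with loops in $B\Top\pr n$ based at $\ast$. Postcomposition with $\bb E_{\bullet}^{\t}$ then gives (b).

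The main work lies in (a). Here I would use the explicit diagrammatic presentation of $\bb E_B^{\t}$ promised by the abstract. The idea is to package these presentations into a single construction that exhibits $\bb E_{\bullet}^{\t}$ as a colimit of cocontinuous assignments in $B$, so that cocontinuity of $\bb E_{\bullet}^{\t}$ follows from the commutation of colimits with colimits. Concretely, one should arrange the indexing data into a cocartesian fibration over $\cal S_{/B\Top\pr n}$ whose fiber over $B$ indexes the explicit diagram for $\bb E_B^{\t}$, and check cocontinuity of the total construction fiberwise. The essential input is the connection with locally constant factorization algebras: the explicit diagram expresses $\bb E_B^{\t}$ in local terms, and the local-to-global property of factorization algebras ensures the diagrams glue correctly as $B$ varies through colimits in $\cal S_{/B\Top\pr n}$. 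The principal obstacle is thus step (a); step (b) is a formal rewriting, but (a) carries all of the geometric content and depends on the structural results developed in the body of the paper.
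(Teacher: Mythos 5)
Your top-level reduction to (a) cocontinuity of $\bb E_{\bullet}^{\t}$ and (b) identification of its restriction along the Yoneda embedding with $\bb E_{\bb R^{n}}^{\t}$ is sound and agrees with the paper's strategy; step (b) is essentially definitional (the tangent classifier $\tau$ is by construction the restricted Yoneda embedding followed by unstraightening). The genuine gap is your treatment of (a), which is where all of the content lies, and your argument there is circular. The ``explicit diagram whose colimit is $\bb E_{B}^{\t}$'' is not an available input: it is exactly the output of the theorem (Proposition \ref{prop:intro} is deduced \emph{from} Theorem \ref{thm:intro1}, not the other way around). Likewise, the local-to-global property of locally constant factorization algebras is deduced in the paper as Corollary \ref{cor:FA_glue} from the main theorem; and even if you imported Matsuoka's theorem as a black box, it only gives the cosheaf property of $\bb E_{\bullet}^{\t}$ on $\Mfld_{n}$, i.e.\ gluing along open covers of manifolds, which is strictly weaker than preservation of arbitrary small colimits in $\cal S_{/B\Top\pr n}$ --- the paper emphasizes precisely this when comparing the two results. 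So your sketch of (a) restates the goal (``package the presentations into a construction exhibiting $\bb E_{\bullet}^{\t}$ as a colimit of cocontinuous assignments'') without supplying a mechanism that proves it.

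The paper's actual argument for (a) has a different shape. Since $\bb E_{B}^{\t}$ is defined as the pullback $\bb E_{B\Top\pr n}^{\t}\times_{B\Top\pr n^{\amalg}}B^{\amalg}$, the functor $\bb E_{\bullet}^{\t}$ factors as
\[
\cal S_{/B\Top\pr n}\xrightarrow{\pr -^{\amalg}}\pr{\Op_{\infty}}_{/B\Top\pr n^{\amalg}}\xrightarrow{\iota^{*}}\pr{\Op_{\infty}}_{/\bb E_{B\Top\pr n}^{\t}}\to\Op_{\infty},
\]
and the key input (Lemma \ref{lem:key}) is that $\pr -^{\amalg}:\cal S\to\Op_{\infty}$ is a \emph{universal} left Kan extension of its restriction to contractible Kan complexes, i.e.\ the relevant colimit cones are stable under pullback; this is what allows the Kan-extension property to survive the base change $\iota^{*}$ along $\bb E_{B\Top\pr n}^{\t}\to B\Top\pr n^{\amalg}$, since colimits in $\Op_{\infty}$ are not universal in general. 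Establishing that lemma is the technical heart of the paper (refraction maps, the universal-colimit criterion, and the criterion for universal weak equivalences in $\SS_{/\frak{Op}}^{+}$ via Lurie's assembly theorem), and nothing in your proposal substitutes for it. To repair your plan you would need either a proof that the pullbacks of the relevant $\pr -^{\amalg}$-colimit cones remain colimit cones, or an independent proof that $\bb E_{\bullet}^{\t}$ preserves all small colimits of spaces over $B\Top\pr n$; neither follows from the factorization-algebra gluing statements you invoke.
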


In other words, for any right $\Top\pr n$-space $X$ with classifying
map $B\to B\Top\pr n$, there is a natural equivalence of $\infty$-operads

\[
X\otimes_{\Top\pr n}\bb E_{\bb R^{n}}^{\otimes}\simeq\bb E_{B}^{\t}.
\]

To explain why Theorem \ref{thm:intro1} implies Proposition \ref{prop:intro},
let $X\in\Fun\pr{B\Top\pr n^{\op},\cal S}$ be a right $\Top\pr n$-space
with classifying map $B\to B\Top\pr n$. By the colimit formula for
Kan extensions, we have
\[
X\otimes_{B\Top\pr n}\bb E_{\bb R^{n}}^{\t}=\colim\pr{B\Top\pr n_{/X}\to B\Top\pr n\xrightarrow{\bb E_{\bb R^{n}}^{\t}}\Op_{\infty}}.
\]
The Yoneda lemma implies that the right fibration $B\Top\pr n_{/X}\to B\Top\pr n$
classifies the right $\Top\pr n$-space $X$, so $B$ is equivalent
to $B\Top\pr n_{/X}$ as a space over $B\Top\pr n$. Also, Theorem
\ref{thm:intro1} says that $X\otimes_{B\Top\pr n}\bb E_{\bb R^{n}}^{\t}$
is equivalent to $\bb E_{B}^{\t}$. Hence $\bb E_{B}^{\t}$ is the
colimit of the diagram $B\to B\Top\pr n\xrightarrow{\bb E_{\bb R^{n}}^{\t}}\Op_{\infty}$,
as claimed.
\begin{rem}
The above computation also explains why Theorem \ref{thm:intro1}
ought to be true. Indeed, if $f:B\Top\pr n\to\Op_{\infty}$ is a diagram
whose colimit is $\bb E_{B\Top\pr n}^{\t}$, then $\bb E_{B}^{\t}$
should be the colimit of the composite $B\to B\Top\pr n\xrightarrow{f}\Op_{\infty}$.
The above computation then shows that the composite $\Fun\pr{B\Top\pr n^{\op},\cal S}\simeq\cal S_{/B\Top\pr n}\xrightarrow{\bb E_{\bullet}^{\t}}\Op_{\infty}$
should be the left Kan extension of its restriction along the Yoneda
embedding $y:B\Top\pr n\to\Fun\pr{B\Top\pr n^{\op},\cal S}$. It is
not hard to see that the composite 
\[
B\Top\pr n\xrightarrow{y}\Fun\pr{B\Top\pr n^{\op},\cal S}\simeq\cal S_{/B\Top\pr n}\xrightarrow{\bb E_{\bullet}^{\t}}\Op_{\infty}
\]
is nothing but the functor $\bb E_{\bb R^{n}}^{\t}:B\Top\pr n\to\Op_{\infty}$.
Thus we arrive at the statement of Theorem \ref{thm:intro1}. 
\end{rem}

\begin{rem}
The action of $\Top\pr n$ on $\bb E_{\bb R^{n}}^{\t}$ admits a nice
geometric picture. Given an $n$-manifold $M$, regarded as a space
over $B\Top\pr n$ by the classifying map of its tangent microbundle,
the $\infty$-operad $\bb E_{M}^{\t}$ can informally be described
as follows\footnote{Recall that, despite the name, $\infty$-operads are a generalization
of colored operads.}:
\begin{enumerate}
\item The colors of $\bb E_{M}^{\t}$ are the embeddings $\bb R^{n}\to M$.
\item Given a collection $\iota_{1},\dots,\iota_{k},\iota:\bb R^{n}\to M$
of embeddings, a multiarrow $\pr{\iota_{1},\dots,\iota_{k}}\to\iota$
consists of an embedding $f:\coprod_{i=1}^{k}\bb R^{n}\to\bb R^{n}$
and a collection of isotopies $\{\iota_{i}\simeq\iota f\}_{1\leq i\leq k}$.
\end{enumerate}
In other words, the $\infty$-operad $\bb E_{M}^{\t}$ is something
like the little $n$-cubes operad, but in which the little $n$-cubes
are now embedded into the ``ambient space'' $M$. Every embedding
$M\to N$ of $n$-manifolds induces a map $\bb E_{M}^{\t}\to\bb E_{N}^{\t}$
which embeds the little cubes into the larger ambient space $N$.
The action of $\Top\pr n$ on $\bb E_{\bb R^{n}}^{\t}$ arises when
we take $M=N=\bb R^{n}$.
\end{rem}

\begin{rem}
\label{rem:main}Although this is just a paraphrase of Theorem \ref{thm:main},
the following reformulation of the universal property of $\bb E_{B}^{\t}$
is worth pointing out: For every $\infty$-operad $\cal O^{\t}$,
a map $\bb E_{B}^{\t}\to\cal O^{\t}$ of $\infty$-operads is equivalent
to a $\Top\pr n$-equivariant map $B\times_{B\Top\pr n}E\Top\pr n\to\Map_{\Op_{\infty}}\pr{\bb E_{\bb R_{n}}^{\t},\cal O^{\t}}$.
More precisely, there is a homotopy equivalence
\[
\Map_{\Op_{\infty}}\pr{\bb E_{B}^{\t},\cal O^{\t}}\simeq\Map_{\Fun\pr{B\Top\pr n^{\op},\cal S}}\pr{B\times_{B\Top\pr n}E\Top\pr n,\Map_{\Op_{\infty}}\pr{\bb E_{\bb R^{n}}^{\t},\cal O^{\t}}}
\]
which is natural in $\cal O^{\t}\in\Op_{\infty}$ and $B\in\cal S_{/B\Top\pr n}$.

For instance, if $M$ is an $n$-manifold, then a map $\bb E_{M}^{\t}\to\cal O^{\t}$
is equivalent to a $\Top\pr n$-equivariant map from the topological
frame bundle of $M$ (i.e., the principal $\Top\pr n$-bundle associated
with the tangent microbundle) to $\Map_{\Op_{\infty}}\pr{\bb E_{\bb R^{n}}^{\t},\cal O^{\t}}$. 
\end{rem}

\subsection*{Related Works}

Algebras over the $\infty$-operad $\bb E_{M}^{\t}$, where $M$ is
an $n$-manifold, are called\textit{ locally constant factorization
algebras}\footnote{More precisely, $\bb E_{M}$-algebras are equivalent to locally constant
factorization algebras; see \cite[Theorem 5.4.5.9]{HA}.}\textit{ }on $M$ and appear in classical and quantum field theory
\cite{FAQFT_1, FAQFT_2}. Theorem \ref{thm:intro1} has an important
antecedent from the theory of\textbf{ }locally constant factorization
algebras, as we now explain.

A basic problem with anything associated with manifolds is whether
it obeys the local-to-global principle. Matsuoka showed that locally
constant factorization algebras have this property:
\begin{thm}
[Matsuoka {\cite[Theorem 1.3]{Matsu17}}] Let $\cal C^{\t}$ be a
symmetric monoidal $\infty$-category and let $M$ be an $n$-manifold.
The assignment
\[
U\mapsto\Alg_{\bb E_{U}}\pr{\cal C}
\]
determines a sheaf of $\infty$-categories on $M$, where $\Alg_{\bb E_{U}}\pr{\cal C}$
denotes the $\infty$-category of $\bb E_{U}$-algebras in $\cal C$.
\end{thm}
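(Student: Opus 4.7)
The plan is to combine Theorem~\ref{thm:intro1} with the $\infty$-categorical Seifert--van Kampen theorem. The overall strategy is to show that $U\mapsto\bb E_{U}^{\t}$ is a cosheaf of $\infty$-operads on $M$, and then to apply the contravariant functor $\Alg_{-}\pr{\cal C}$, which carries colimits in $\Op_{\infty}$ to limits in $\Cat_{\infty}$ because $\Alg_{\cal O}\pr{\cal C}$ is by definition an $\infty$-category of $\infty$-operad maps $\cal O^{\t}\to\cal C^{\t}$.

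To prove the cosheaf property, I would factor $U\mapsto\bb E_{U}^{\t}$ as the composite
\[
\Open\pr M\to\cal S_{/B\Top\pr n}\xrightarrow{\bb E_{\bullet}^{\t}}\Op_{\infty},
\]
where the first functor sends an open subset $U$ to the classifier $U\to B\Top\pr n$ of its tangent microbundle. Theorem~\ref{thm:intro1} identifies $\bb E_{\bullet}^{\t}$ with a left Kan extension from $B\Top\pr n$ along the Yoneda embedding, and therefore it preserves all small colimits. It thus suffices to check that, for every open cover $\cal U$ of $U\subseteq M$, the \v{C}ech nerve of $\cal U$ is a colimit diagram in $\cal S_{/B\Top\pr n}$. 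Since colimits in overcategories are computed in the underlying $\infty$-category, and the tangent microbundle of $U$ restricts to that of every $U_{i}\in\cal U$, this reduces to the statement that $U$ is the colimit of the \v{C}ech nerve of $\cal U$ in the $\infty$-category $\cal S$ of spaces.

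The latter is an $\infty$-categorical Seifert--van Kampen theorem and constitutes the main geometric input of the proof. The principal obstacle is to verify it for \emph{arbitrary} open covers, not only for good covers or hypercovers: one must either refine the given cover to a good one and check that the resulting colimit is invariant under refinement, or invoke a hyperdescent statement in the $\infty$-topos $\cal S$. Once this geometric input is granted, the remainder of the argument is purely formal, and one recovers Matsuoka's theorem.
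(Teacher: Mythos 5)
You follow essentially the same route as the paper: Theorem \ref{thm:intro1} implies that $\bb E_{\bullet}^{\t}:\cal S_{/B\Top\pr n}\to\Op_{\infty}$ preserves small colimits, the cosheaf property is thereby reduced to the tangent classifier $\Open\pr M\to\cal S_{/B\Top\pr n}$, and applying the limit-preserving functor $\Alg_{\bullet}\pr{\cal C}:\Op_{\infty}^{\op}\to\Cat_{\infty}$ gives the sheaf statement. The ``principal obstacle'' you flag is handled directly by Lurie's Seifert--van Kampen theorem \cite[Theorem A.3.1]{HA}, which applies to arbitrary covers (the paper states the cosheaf condition for downward-closed covers), so no refinement to good covers or hyperdescent is needed; moreover, your glossed identification of the underlying space of the classifier of $U$ with $U$ is made precise in the paper by computing colimits pointwise, reducing to the presheaf $\Sing\Emb\pr{\bb R^{n},-}$, and using the pullback square relating $\Sing\Emb\pr{\bb R^{n},U}$ and $\Sing U$ together with universality of colimits in $\cal S$.
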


Matsuoka's theorem is a special case of Theorem \ref{thm:intro1}.
To see this, recall that if $\cal C$ is a small $\infty$-category
and $\cal D$ is an $\infty$-category with small colimits, then a
functor $F:\Fun\pr{\cal C^{\op},\cal S}\to\cal D$ is a left Kan extension
of its restriction along the Yoneda embedding if and only if it preserves
small colimits. In particular, Theorem \ref{thm:intro1} implies that
the functor $\bb E_{\bullet}^{\t}:\cal S_{/B\Top\pr n}\to\Op_{\infty}$
preserves small colimits. Since the tangent classifier functor $\tau:\Open\pr M\to\cal S_{/B\Top\pr n}$
is evidently a cosheaf,\footnote{For a formal proof, see the proof of Corollary \ref{cor:FA_glue}.}
this means that the composite
\[
\Open\pr M\xrightarrow{\tau}\cal S_{/B\Top\pr n}\xrightarrow{\bb E_{\bullet}^{\t}}\Op_{\infty}
\]
is also a cosheaf. Matsuoka's theorem then follows from the observation
that the functor $\Alg_{\bullet}\pr{\cal C}:\Op_{\infty}^{\op}\to\Cat_{\infty}$
preserves small limits. In
this sense, Theorem \ref{thm:intro1} generalizes Matsuoka's theorem
by allowing gluing of arbitrary spaces over $B\Top\pr n$, not just
manifolds.\footnote{Another direction of generalization of Matsuoka's theorem is investigated
in \cite{KSW24}, where they consider the gluing properties of constructible
factorization algebras on stratified manifolds.} 

We should also remark that our technique is quite robust and applies
equally well to general topological operads with actions of group-like
topological monoids. We will work with the little $n$-cubes operads
for concreteness, but readers interested in a version of Theorem \ref{thm:intro1}
for other operads should have no problem translating the arguments
of this paper (see Subsection \ref{subsec:main}) to meet their needs.

Finally, algebras over $\bb E_{B}^{\t}$ play a central role in the
theory of \textit{topological chiral homology} \cite[$\S$5.5]{HA},
alias \textit{factorization homology} \cite{FHTM} (where they go
under the name of $\Disk_{n}^{B}$-algebras). We hope that this paper
adds to the conceptual understanding of $\bb E_{B}^{\t}$-algebras.

\subsection*{Outline of the Paper}

In Section \ref{sec:main}, we prove Theorem \ref{thm:intro1}, assuming
a lemma on some construction of $\infty$-operads. The lemma will
then be proved in Section \ref{sec:lemma}.

\subsection*{Acknowledgment}

I am grateful to an anonymous referee for carefully reading a manuscript
of this paper, offering valuable feedback, and providing a simple
proof of Proposition \ref{prop:univ_colim_criterion}. I also thank
Michael Weiss for his comment, which improved the exposition of this
paper. Finally, I appreciate Daisuke Kishimoto and Mitsunobu Tsutaya
for their constant support and encouragement. This work was partially
supported by JSPS KAKENHI Grant Number 24KJ1443.

\subsection*{Notation and Convention}
\begin{itemize}
\item Following \cite{HTT}, we use the term $\infty$-category as a synonym
for Joyal's quasicategory \cite{Joyal_qcat_Kan}. Unless stated otherwise,
our notation and terminology follow Lurie's books \cite{HTT} and
\cite{HA}.
\item If $\bf A$ is a simplicial model category, we let $\bf A^{\circ}\subset\bf A$
denote the full simplicial subcategory spanned by the fibrant-cofibrant
objects.
\item The simplicial category of simplicial sets, equipped with the Kan--Quillen
model structure, will be denoted by $\SS$. 
\item The category of marked simplicial sets will be denoted by $\SS^{+}$. 
\item We will write $\Fin_{\ast}$ for the category whose objects are the
finite pointed sets $\inp n=\pr{\{\ast,1,\dots,n\},\ast}$ and whose
morphisms are the maps of pointed sets.
\item We let $\Op_{\infty}^{\Delta}$ denote the simplicial category of
$\infty$-operads, defined as in \cite[Definition 2.1.4.1]{HA}, and
write $\Op_{\infty}$ for its homotopy coherent nerve.
\item If $M$ and $N$ are topological manifolds (without boundaries), we
will write $\Emb\pr{M,N}$ for the space of topological embeddings
$M\to N$, topologized by the compact-open topology. We let $\Top\pr n\subset\Emb\pr{\bb R^{n},\bb R^{n}}$
denote the subspace of self-homeomorphisms of $\bb R^{n}$.
\item We will write $\Mfld_{n}^{\Delta}$ for the simplicial category whose
objects are the topological $n$-manifolds (without boundary), and
whose hom-simplicial sets are given by $\Sing\Emb\pr{-,-}$. The homotopy
coherent nerve of $\Mfld_{n}^{\Delta}$ will be denoted by $\Mfld_{n}$.
\item We will write $B\Top\pr n^{\Delta}\subset\Mfld_{n}^{\Delta}$ for
the full simplicial subcategory spanned by $\bb R^{n}$, and let $B\Top\pr n$
denote its homotopy coherent nerve. (This notation is justified by
Kister--Mazur's theorem \cite[Theorem 1]{Kister_MFB}, which says
that the inclusion $\Top\pr n\hookrightarrow$ $\Emb\pr{\bb R^{n},\bb R^{n}}$
is a homotopy equivalence. We also remark that their theorem implies
that $B\Top\pr n$ is a Kan complex.)
\item If $K$ is a simplicial set, we will denote the cone point of the
simplicial set $K^{\rcone}$ by $\infty$.
\item If $\cal C$ is an $\infty$-category, we will write $\cal C^{\simeq}\subset\cal C$
for the maximal sub Kan complex of $\cal C$.
\end{itemize}

\section{\label{sec:main}Proof of Theorem \ref{thm:intro1}}

The goal of this section is to prove Theorem \ref{thm:intro1}. We
start by giving precise definitions of the functor $\bb E_{\bullet}^{\t}$
and related constructions in Subsection \ref{subsec:def}. We then
prove some preliminary results in Subsection \ref{subsec:univ_colim}.
After this, we prove the theorem in Subsection \ref{subsec:main},
assuming a lemma that we will prove in Section \ref{sec:lemma} (Lemma
\ref{lem:key}). In Subsection \ref{subsec:glue}, we use the theorem
to give an alternative proof of Matsuoka's gluing theorem on locally
constant factorization algebras.

\subsection{\label{subsec:def}Definitions}

In this subsection, we recall the definitions of the functor $\bb E_{\bullet}^{\t}:\cal S_{/B\Top\pr n}\to\Op_{\infty}$
and some related constructions. 
\begin{defn}
\cite[Construction 2.4.3.1]{HA} We define a simplicial functor $\pr -^{\amalg}:\SS^{\circ}\to\Op_{\infty}^{\Delta}$
as follows: Let $\iota:\Fin_{\ast}\to\Set$ denote the forgetful functor
and let $\int\iota$ denote its category of elements. We let $\Gamma^{*}\subset\int\iota$
denote the full subcategory spanned by the objects $\pr{\inp k,i}$
such that $i\in\inp k\setminus\{\ast\}$. We then define $\pr -^{\amalg}:\SS\to\SS_{/N\pr{\Fin_{\ast}}}$
as the right adjoint of the simplicial functor $X\mapsto X\times_{N\pr{\Fin_{\ast}}}N\pr{\Gamma^{*}}$.
According to \cite[Proposition 2.4.3.3]{HA}, this simplicial functor
lifts to a simplicial functor $\pr -^{\amalg}:\SS^{\circ}\to\Op_{\infty}^{\Delta}$. 
\end{defn}

\begin{rem}
Let $\cal C$ be a simplicial category. The simplicial set $N\pr{\cal C}^{\amalg}$
is isomorphic to the homotopy coherent nerve of the simplicial category
$\cal C^{\amalg}$ whose objects are the (possibly empty) sequences
$\pr{C_{1},\dots,C_{k}}$ of objects of $\cal C$, and whose hom-simplicial
sets are given by
\[
\cal C^{\amalg}\pr{\pr{C_{1},\dots,C_{k}},\pr{D_{1},\dots,D_{l}}}=\coprod_{\alpha:\inp k\to\inp l}\prod_{j=1}^{l}\prod_{i\in\alpha^{-1}\pr j}\cal C\pr{C_{i},D_{j}},
\]
where the coproduct ranges over the morphisms $\inp k\to\inp l$ in
$\Fin_{\ast}$.
\end{rem}

\begin{rem}
\label{rem:amalg}Using \cite[Remark B.3.9]{HA}, we can check that
the functor $N\pr{\Gamma^{*}}\to N\pr{\Fin_{\ast}}$ is a flat inner
fibration. Therefore, the functor $-\times_{N\pr{\Fin_{\ast}}}N\pr{\Gamma^{*}}:\SS_{/N\pr{\Fin_{\ast}}}\to\SS$
is left Quillen with respect to the Joyal model structures \cite[Corollary B.3.15]{HA}.
It follows that every Kan fibration $X\to Y$ of Kan complexes induces
a fibration of $\infty$-operads $X^{\amalg}\to Y^{\amalg}$.
\end{rem}

\begin{defn}
\label{def:E_B}\cite[Definition 5.4.2.10]{HA} We define a simplicial
functor $\bb E_{\bullet}^{\t}:\pr{\SS_{/B\Top\pr n}}^{\circ}\to\Op_{\infty}^{\Delta}$
as follows: Let $\bb E_{B\Top\pr n}^{\t}$ denote the operadic nerve
of the topological operad whose space of operations of arity $k$
is $\Emb\pr{\bb R^{n}\times\{1,\dots,k\},\bb R^{n}}$. The restriction
map
\[
\Emb\pr{\bb R^{n}\times\{1,\dots,k\},\bb R^{n}}\to\prod_{i=1}^{k}\Emb\pr{\bb R^{n},\bb R^{n}}
\]
determines a functor $\bb E_{B\Top\pr n}^{\t}\to B\Top\pr n^{\amalg}$.
Given a Kan fibration $B\to B\Top\pr n$ of Kan complexes, we set
\[
\bb E_{B}^{\t}=\bb E_{B\Top\pr n}^{\t}\times_{B\Top\pr n^{\amalg}}B^{\amalg}.
\]
Note that $\bb E_{B}^{\t}$ is an $\infty$-operad by Remark \ref{rem:amalg}.
\end{defn}

\begin{defn}
\cite[$\S$2.1]{FHTM} We define the \textbf{tangent classifier functor}
$\tau:\Mfld_{n}^{\Delta}\to\pr{\SS_{/B\Top\pr n}}^{\circ}$ to be
the composite simplicial functor
\begin{align*}
\Mfld_{n}^{\Delta} & \xrightarrow{\text{yoneda}}\Fun^{s}\pr{\pr{\Mfld_{n}^{\Delta}}^{\op},\SS^{\circ}}\\
 & \xrightarrow{\text{restriction}}\Fun^{s}\pr{\pr{B\Top\pr n^{\Delta}}^{\op},\SS^{\circ}}\\
 & \xrightarrow{\Un_{\varepsilon}}\pr{\SS_{/B\Top\pr n}}^{\circ},
\end{align*}
where $\Fun^{s}\pr{-,-}$ denotes the simplicial category of simplicial
functors, and $\Un_{\varepsilon}$ denotes the unstraightening functor
\cite[$\S$2.2.1]{HTT} with respect to the counit map $\varepsilon:\fr C[B\Top\pr n]\to B\Top\pr n^{\Delta}$.
\end{defn}

\begin{rem}
\label{rem:t_cls}The name ``tangent classifier'' comes from the
observation that, if $M$ is an $n$-manifold, then $\tau\pr M$ can
be regarded as a model of the classifying map of the tangent microbundle
of $M$. (In particular, the total space of $\tau\pr M$ has the homotopy
type of $\Sing M$.) More precisely, let $\xi:TM\to M$ denote the
fiber bundle with fiber $\bb R^{n}$ associated to the tangent microbundle
of $M$, and let $\pi:\opn{Fr}\pr M\to M$ denote the associated $\Top_{0}\pr n$-bundle,
where $\Top_{0}\pr n\subset\Top\pr n$ denotes the subgroup of homeomorphisms
that fixes the origin. (Thus, for each point $p\in M$, the space
$T_{p}M=\xi^{-1}\pr p$ is an open subset of $\{p\}\times M$, and
$\opn{Fr}_{p}\pr M=\pi^{-1}\pr p$ is the subspace $\Homeo_{0}\pr{\bb R^{n},T_{p}M}\subset\Emb\pr{\bb R^{n},T_{p}M}$
of homeomorphisms $\bb R^{n}\to T_{p}M$ carrying the origin to $\pr{p,p}\in T_{p}M$.)
We will see in the next paragraph that the map
\[
\theta:\opn{Fr}\pr M\to\Emb\pr{\bb R^{n},M}
\]
is a weak homotopy equivalence. Since $\tau\pr M$ is the classifying
map of $\Sing\Emb\pr{\bb R^{n},M}\in\Fun\pr{B\Top\pr n^{\op},\cal S}$
by definition, and since the inclusion $\Top_{0}\pr n\hookrightarrow\Top\pr n$
is a homotopy equivalence, this proves that $\tau\pr M$ can be identified
with the classifying map of the tangent microbundle of $M$. 

To see that $\theta$ is a weak homotopy equivalence, observe that
$\opn{Fr}\pr M$ and $\opn{Emb}\pr{\bb R^{n},M}$ project to $M$
via Serre fibrations. (For $\opn{Fr}\pr M$, this is clear because
$\pi$ is a fiber bundle. For $\Emb\pr{\bb R^{n},M}$, this is proved
in \cite[Proposition 2.7]{A24b}.) It will therefore suffice to show
that for each $p\in M$, the inclusion
\[
\theta_{p}:\opn{Fr}_{p}\pr M=\Homeo_{0}\pr{\bb R^{n},T_{p}M}\hookrightarrow\Emb\pr{\bb R^{n},M}\times_{M}\{p\}
\]
is a weak homotopy equivalence. We can factor $\theta_{p}$ as
\[
\Homeo_{0}\pr{\bb R^{n},T_{p}M}\xrightarrow{\phi}\Emb\pr{\bb R^{n},T_{p}M}\times_{T_{p}M}\{\pr{p,p}\}\xrightarrow{\psi}\Emb\pr{\bb R^{n},M}\times_{M}\{p\}.
\]
The map $\phi$ is a homotopy equivalence by Kister--Mazur's theorem
\cite[Theorem 1]{Kister_MFB}. The map $\psi$ is a weak homotopy
equivalence by \cite[Proposition 2.9]{A24b}, and we are done.
\end{rem}

\begin{defn}
\label{def:E_M}We define a simplicial functor $\bb E_{\bullet}^{\t}:\Mfld_{n}^{\Delta}\to\Op_{\infty}^{\Delta}$
to be the composite
\[
\Mfld_{n}^{\Delta}\xrightarrow{\tau}\pr{\SS_{/B\Top\pr n}}^{\circ}\xrightarrow{\bb E_{\bullet}^{\t}}\Op_{\infty}^{\Delta}.
\]
The restriction $\bb E_{\bullet}^{\t}\vert B\Top\pr n^{\Delta}$ will
be denoted by $\bb E_{\bb R^{n}}^{\t}:B\Top\pr n^{\Delta}\to\Op_{\infty}^{\Delta}$.
\end{defn}

\begin{rem}
\label{rem:direct_computation}By direct computation, we can check
that if $M$ is an $n$-manifold, then the object $\tau\pr M\in\SS_{/B\Top\pr n}$
is (isomorphic to) the projection $B\Top\pr n_{/M}=B\Top\pr n\times_{\Mfld_{n}}\Mfld_{n/M}\to B\Top\pr n$.
(Readers unfamiliar with the explicit description of the unstraightening
functor should consult \cite[$\S$4]{A24c}.) In particular, our definition
of the $\infty$-operad $\bb E_{M}^{\t}=\bb E_{\tau\pr M}^{\t}$ coincides
with Lurie's definition \cite[Definition 5.4.5.1]{HA}.
\end{rem}

\begin{defn}
\label{def:abuse}We will write $\pr -^{\amalg}:\cal S\to\Op_{\infty}$
for the homotopy coherent nerve of the simplicial functor $\pr -^{\amalg}:\SS^{\circ}\to\Op_{\infty}^{\Delta}$.
We define functors $\bb E_{\bullet}^{\t}:N\pr{\pr{\SS_{/B\Top\pr n}}^{\circ}}\to\Op_{\infty}$,
$\tau:\Mfld_{n}\to N\pr{\pr{\SS_{/B\Top\pr n}}^{\circ}}$, and $\bb E_{\bullet}^{\t}:\Mfld_{n}\to\Op_{\infty}$
similarly.

By slightly abusing notation, we will use the symbol $\bb E_{\bullet}^{\t}$
for any functor $F:\cal S_{/B\Top\pr n}\to\Op_{\infty}$ rendering
the diagram
\[\begin{tikzcd}
	{N((\mathsf{sSet}_{/B\mathrm{Top}(n)})^\circ)} && {\mathcal{O}\mathsf{p}_\infty} \\
	& {\mathcal{S}_{/B\mathrm{Top}(n)}}
	\arrow["{N(\mathbb{E}_\bullet ^\otimes )}", from=1-1, to=1-3]
	\arrow["\simeq"', from=1-1, to=2-2]
	\arrow["F"', from=2-2, to=1-3]
\end{tikzcd}\]commutative up to natural equivalence, where the slanted arrow on
the left is the categorical equivalence of \cite[\href{https://kerodon.net/tag/01ZT}{Tag 01ZT}]{kerodon}. 
\end{defn}

\begin{defn}
\label{def:Un}Let $\cal C^{\Delta}$ be a small simplicial category
whose hom-simplicial sets are Kan complexes, and let $\cal C=N\pr{\cal C^{\Delta}}$
be its homotopy coherent nerve. By the \textbf{unstraightening functor
}(or the\textbf{ unstraightening equivalence}), we mean any functor
$\Fun\pr{\cal C,\cal S}\to N\pr{\pr{\SS_{/\cal C}}_{{\rm contra}}^{\circ}}$
rendering the diagram 
\[\begin{tikzcd}
	{N(\operatorname{Fun}^s(\mathcal{C}_\Delta^\mathrm{op},\mathsf{sSet})^\circ)} & {N((\mathsf{sSet}_{/\mathcal{C}})^\circ _{\mathrm{contra}})} \\
	{\operatorname{Fun}(\mathcal{C}^{\mathrm{op}},\mathcal{S})}
	\arrow["{N(\operatorname{Un}_\varepsilon)}", from=1-1, to=1-2]
	\arrow["\simeq"', from=1-1, to=1-2]
	\arrow["\simeq"', from=1-1, to=2-1]
	\arrow[dashed, from=2-1, to=1-2]
\end{tikzcd}\]commutative, where $\pr{\SS_{/\cal C}}_{{\rm contra}}$ denotes the
contravariant model structure \cite[$\S$2.1.4]{HTT} and $\Un_{\varepsilon}$
denotes the unstraightening functor \cite[$\S$2.2.1]{HTT} with respect
to the counit map $\varepsilon:\fr C[\cal C]\to\cal C^{\Delta}$,
and the left vertical arrow is the categorical equivalence of \cite[Proposition 4.2.4.4]{HTT}.

Suppose now that $\cal C$ is a Kan complex. Then the contravariant
model structure on $\SS_{/\cal C}$ coincides with the Kan--Quillen
model structure, so there is a categorical equivalence $N\pr{\pr{\SS_{/\cal C}}_{{\rm contra}}^{\circ}}\xrightarrow{\simeq}\cal S_{/\cal C}$
\cite[\href{https://kerodon.net/tag/01ZT}{Tag 01ZT}]{kerodon}. We
will refer to the composite
\[
\Fun\pr{\cal C^{\op},\cal S}\to N\pr{\pr{\SS_{/\cal C}}_{{\rm contra}}^{\circ}}\xrightarrow{\simeq}\cal S_{/\cal C}
\]
also as the \textbf{unstraightening functor}.
\end{defn}

\begin{rem}
\label{rem:tangent_classifier}The functor $\tau:\Mfld_{n}\to N\pr{\pr{\SS_{/B\Top\pr n}}^{\circ}}$
is naturally equivalent to the composite
\[
\Mfld_{n}\xrightarrow{y}\Fun\pr{\Mfld_{n}^{\op},\cal S}\xrightarrow{i^{*}}\Fun\pr{B\Top\pr n^{\op},\cal S}\xrightarrow{\simeq}N\pr{\pr{\SS_{/B\Top\pr n}}^{\circ}},
\]
where $y$ denotes the Yoneda embedding, the second functor is the
restriction along the inclusion $i:B\Top\pr n\hookrightarrow\Mfld_{n}$,
and the last equivalence is the unstraightening equivalence. Indeed,
replacing $\Mfld_{n}$ by its full subcategory spanned by the manifolds
embedded in some Euclidean space, we may assume that $\Mfld_{n}$
is small. The naturality of unstraightening \cite[Appendix A]{GHN17}
implies that the diagram 
\[\begin{tikzcd}
	{\operatorname{Fun}(\mathcal{M}\mathsf{fld}_n^{\mathrm{op}},\mathcal{S})} & {N((\mathsf{sSet}_{/\mathcal{M}\mathsf{fld}_n})^\circ_{\mathrm{contra}})} \\
	{\operatorname{Fun}(B\mathrm{Top}(n)^{\mathrm{op}},\mathcal{S})} & {N((\mathsf{sSet}_{/B\mathrm{Top}(n)})^\circ)}
	\arrow["\simeq", from=1-1, to=1-2]
	\arrow[from=1-1, to=2-1]
	\arrow[from=1-2, to=2-2]
	\arrow["\simeq"', from=2-1, to=2-2]
\end{tikzcd}\]commutes up to natural equivalence, where the horizontal arrows are
the unstraightening equivalences. Thus, it suffices to show that $\tau$
is naturally equivalent to the composite
\[
\Mfld_{n}\xrightarrow{y}\Fun\pr{\Mfld_{n}^{\op},\cal S}\xrightarrow{\simeq}N\pr{\pr{\SS_{/\Mfld_{n}}}_{{\rm contra}}^{\circ}}\to N\pr{\pr{\SS_{/B\Top\pr n}}^{\circ}},
\]
which follows from the definitions. 
\end{rem}

\subsection{\label{subsec:univ_colim}Universal Colimit Diagrams}

Let $\cal C$ be an ordinary category with pullbacks and small colimits.
In ordinary category theory, we say that \textbf{colimits in $\cal C$
are universal} if for each morphism $f:X\to Y$ in $\cal C$, the
pullback functor $f^{*}:\cal C_{/Y}\to\cal C_{/X}$ preserves small
colimits. In some cases, colimits in $\cal C$ may not be universal,
but some particular colimit diagram in $\cal C_{/Y}$ is preserved
by the pullback functor $f^{*}$. The goal of this subsection is to
record some basic facts on such colimit diagrams in the setting of
$\infty$-categories.
\begin{defn}
Let $\cal C$ be an $\infty$-category with pullbacks, let $K$ be
a simplicial set, and let $p:K^{\rcone}\to\cal C$ be a diagram. We
say that $p$ is a \textbf{universal colimit diagram} if for each
cartesian natural transformation $\alpha:K^{\rcone}\times\Delta^{1}\to\cal C$
such that $\alpha\vert K^{\rcone}\times\{1\}=p$, the diagram $\alpha\vert K^{\rcone}\times\{0\}$
is a colimit diagram.
\end{defn}

\begin{defn}
Let $\cal C$ be an $\infty$-category with pullbacks. Given a morphism
$f:X\to Y$ in $\cal C$, we let $f^{*}:\cal C_{/Y}\to\cal C_{/X}$
denote the right adjoint of the composite
\[
\cal C_{/X}\xrightarrow[\phi]{\simeq}\cal C_{/f}\to\cal C_{/Y},
\]
where the functor $\phi$ is any section of the trivial fibration
$\cal C_{/f}\xrightarrow{\simeq}\cal C_{/X}$.
\end{defn}

\begin{rem}
\cite[Lemma 6.1.3.3]{HA} Let $\cal C$ be an $\infty$-category with
pullbacks, and let $p:K^{\rcone}\to\cal C$ be a diagram. The following
conditions are equivalent:
\begin{enumerate}
\item The diagram $p$ is a universal colimit diagram.
\item For every morphism $f:X\to Y$ in $\cal C$ and for every diagram
$p':K^{\rcone}\to\cal C_{/Y}$ lifting $p$, the composite
\[
K^{\rcone}\xrightarrow{p'}\cal C_{/Y}\xrightarrow{f^{*}}\cal C_{/X}
\]
is a colimit diagram.
\end{enumerate}
\end{rem}

\begin{defn}
Let $f:\cal C\to\cal D$ be a functor of $\infty$-categories, and
let $\cal C'\subset\cal C$ be a full subcategory. Suppose that $\cal D$
has pullbacks. We say that $f$ is a \textbf{universal left Kan extension}
of $f\vert\cal C'$ if for each object $X\in\cal C$, the diagram
\[
\pr{\cal C'_{/X}}^{\rcone}\to\cal C\xrightarrow{f}\cal D
\]
is a universal colimit diagram.
\end{defn}

\begin{prop}
\label{prop:uLan}Let $f:\cal C\to\cal D$ be a functor of $\infty$-categories,
and let $\cal C'\subset\cal C$ be a full subcategory. If $f$ is
a universal left Kan extension of $f\vert\cal C'$, then for every
object $X\in\cal C$, the composite
\[
f_{X}:\cal C_{/X}\to\cal C\xrightarrow{f}\cal D
\]
is a universal left Kan extension of $f_{X}\vert\cal C'_{/X}$. 
\end{prop}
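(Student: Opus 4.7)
The plan is, for each object $Y\in\cal C_{/X}$ represented by a morphism $\bar{Y}:Y\to X$ in $\cal C$, to identify the diagram
\[
p_X:\pr{\pr{\cal C'_{/X}}_{/Y}}^{\rcone}\to\cal C_{/X}\xrightarrow{f_X}\cal D
\]
with the diagram
\[
p:\pr{\cal C'_{/Y}}^{\rcone}\to\cal C\xrightarrow{f}\cal D,
\]
which is universal by hypothesis. Since universality of a colimit diagram depends only on its underlying map into $\cal D$, this identification will immediately give the conclusion.

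The key ingredient is the standard equivalence of iterated slice $\infty$-categories $\pr{\cal C_{/X}}_{/Y}\simeq\cal C_{/Y}$, compatible with the forgetful functors in the sense that the composite $\cal C_{/Y}\xrightarrow{\simeq}\pr{\cal C_{/X}}_{/Y}\to\cal C_{/X}\to\cal C$ agrees with the forgetful $\cal C_{/Y}\to\cal C$. Because $\cal C'\subset\cal C$ is full, this restricts to an equivalence $\pr{\cal C'_{/X}}_{/Y}\simeq\cal C'_{/Y}$ of full subcategories. Both slice $\infty$-categories carry canonical cones with tip $Y$ (viewed as an object of $\cal C_{/X}$ and of $\cal C$, respectively), and the equivalence sends one to the other. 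Extending to right cones and postcomposing with $f$ therefore identifies $p_X$ with $p$ as functors into $\cal D$, with both cone points mapping to $f\pr Y=f_X\pr Y$.

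The main obstacle will be the careful setup of the equivalence $\pr{\cal C_{/X}}_{/Y}\simeq\cal C_{/Y}$ and its compatibility with the canonical cones, at the $\infty$-categorical level. This is standard but requires explicitly verifying that the cone points and forgetful functors on both sides line up. Once this bookkeeping is in place, the conclusion follows directly from the hypothesis, since universality is a property of the diagram viewed as a map into $\cal D$ and is therefore transported across the natural equivalence $p_X\simeq p$.
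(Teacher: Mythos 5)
Your proposal is correct and is essentially the paper's argument: both hinge on the iterated-slice comparison, namely that $\pr{\cal C'_{/X}}_{/Y}\simeq\cal C'\times_{\cal C}\cal C_{/\bar{Y}}\to\cal C'\times_{\cal C}\cal C_{/Y}$ is a trivial fibration (the paper uses this functor directly and invokes its finality, while you invert it by a section and transport the cone diagram). The bookkeeping you flag—cone-point compatibility and invariance of universal colimit diagrams under equivalences of diagrams—is exactly the routine step the paper also leaves implicit, so there is no substantive difference.
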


\begin{proof}
It suffices to show that, for each object $p:C\to X$ in $\cal C_{/X}$,
the functor
\[
\cal C'_{/X}\times_{\cal C_{/X}}\cal C_{/p}\to\cal C'\times_{\cal C}\cal C_{/C}
\]
is final. But this is a trivial fibration, being a pullback of the
trivial fibration $\cal C_{/p}\to\cal C_{/C}$.
\end{proof}

\subsection{\label{subsec:main}Main Result}

We now prove the main theorem of this paper (Theorem \ref{thm:intro1}).
Let us recall the statement of the theorem once again:
\begin{thm}
\label{thm:main}The diagram
\[\begin{tikzcd}
	{\operatorname{Fun}(B\mathrm{Top}(n)^{\mathrm{op}},\mathcal{S})} && {\mathcal{O}\mathsf{p}_\infty} \\
	& {\mathcal{S}_{/B\mathrm{Top}(n)}}
	\arrow["{-\otimes_{\mathrm{Top}(n)} \mathbb{E}_{\mathbb{R}^n}^\otimes}", from=1-1, to=1-3]
	\arrow["\simeq"', from=1-1, to=2-2]
	\arrow["{\mathbb{E}_\bullet^\otimes }"', from=2-2, to=1-3]
\end{tikzcd}\]of $\infty$-categories commutes up to natural equivalences, where
the left slanted arrow is the unstraightening functor (Definition
\ref{def:Un}).
\end{thm}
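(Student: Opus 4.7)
The strategy is to identify $\mathbb{E}_\bullet^\otimes:\mathcal{S}_{/B\mathrm{Top}(n)}\to\mathcal{O}\mathsf{p}_\infty$ as the left Kan extension of $\mathbb{E}_{\mathbb{R}^n}^\otimes:B\mathrm{Top}(n)\to\mathcal{O}\mathsf{p}_\infty$ along the Yoneda-unstraightening composite $j:B\mathrm{Top}(n)\xrightarrow{y}\mathrm{Fun}(B\mathrm{Top}(n)^{\mathrm{op}},\mathcal{S})\simeq\mathcal{S}_{/B\mathrm{Top}(n)}$. Since $\mathcal{S}_{/B\mathrm{Top}(n)}$ is the free cocompletion of $B\mathrm{Top}(n)$, by \cite[Theorem 5.1.5.6]{HTT} this reduces to two verifications: (i) the restriction $\mathbb{E}_\bullet^\otimes\circ j$ is naturally equivalent to $\mathbb{E}_{\mathbb{R}^n}^\otimes$; and (ii) the functor $\mathbb{E}_\bullet^\otimes$ preserves small colimits.

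Claim (i) is essentially a diagram chase. By Remark \ref{rem:tangent_classifier}, the functor $j$ agrees, up to natural equivalence, with the restriction $\tau\vert B\mathrm{Top}(n)$ of the tangent classifier $\tau:\mathcal{M}\mathsf{fld}_n\to\mathcal{S}_{/B\mathrm{Top}(n)}$. Post-composing with $\mathbb{E}_\bullet^\otimes$ then recovers $\mathbb{E}_{\mathbb{R}^n}^\otimes$, precisely by Definition \ref{def:E_M}.

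The work lies in Claim (ii). Unwinding Definition \ref{def:E_B}, the formula $\mathbb{E}_B^\otimes=\mathbb{E}_{B\mathrm{Top}(n)}^\otimes\times_{B\mathrm{Top}(n)^\amalg}B^\amalg$ shows that $\mathbb{E}_\bullet^\otimes$ factors as the $\amalg$-construction $(-)^\amalg:\mathcal{S}_{/B\mathrm{Top}(n)}\to\mathcal{O}\mathsf{p}_{\infty,/B\mathrm{Top}(n)^\amalg}$ followed by pullback along $\mathbb{E}_{B\mathrm{Top}(n)}^\otimes\to B\mathrm{Top}(n)^\amalg$. To establish that this composition preserves colimits, I would invoke the framework of universal colimit diagrams from Subsection \ref{subsec:univ_colim}: colimits in $\mathcal{S}_{/B\mathrm{Top}(n)}$ are universal, so the goal becomes to show that $(-)^\amalg$ sends these universal colimit diagrams to universal colimit diagrams in $\mathcal{O}\mathsf{p}_{\infty,/B\mathrm{Top}(n)^\amalg}$; the subsequent pullback would then deliver the sought-after colimits in $\mathcal{O}\mathsf{p}_\infty$. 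The main obstacle is exactly this universality claim for $(-)^\amalg$: the functor is not manifestly cocontinuous, and the interplay between colimits of spaces and pullbacks of fibrations of $\infty$-operads demands delicate verification. This is the content of the deferred Lemma \ref{lem:key}.
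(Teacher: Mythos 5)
Your overall architecture is the same as the paper's: reduce to showing that $\mathbb{E}_\bullet^\otimes$ is a left Kan extension along the Yoneda embedding, check that its restriction along $j$ is $\mathbb{E}_{\mathbb{R}^n}^\otimes$ (your claim (i) is fine and matches the paper's ``by construction'' step), factor $\mathbb{E}_\bullet^\otimes$ as $(-)^\amalg$ followed by pullback along $\iota\colon\mathbb{E}_{B\mathrm{Top}(n)}^\otimes\to B\mathrm{Top}(n)^\amalg$ (and the forgetful functor), and defer the hard universality statement to Lemma \ref{lem:key}.

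However, step (ii) as written does not match what Lemma \ref{lem:key} actually supplies, and this is a genuine gap in the bridge. Your argument needs: $(-)^\amalg$ carries \emph{every} small colimit diagram of $\mathcal{S}_{/B\mathrm{Top}(n)}$ to a universal colimit diagram, so that the pullback along $\iota$ preserves it. Lemma \ref{lem:key} asserts something more restricted: $(-)^\amalg\colon\mathcal{S}\to\mathcal{O}\mathsf{p}_\infty$ is a universal left Kan extension of its restriction to the contractible Kan complexes $\mathcal{D}$, i.e.\ only the canonical diagrams $(\mathcal{D}_{/X})^{\triangleright}\to\mathcal{O}\mathsf{p}_\infty$ are asserted to be universal colimit diagrams. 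Your blanket statement does not follow from this without a further cofinality/decomposition argument for universal colimits; in the paper, the fact that $\mathbb{E}_\bullet^\otimes$ preserves all small colimits is obtained only \emph{after} the theorem, via \cite[Theorem 5.1.5.6]{HTT} (see Corollary \ref{cor:FA_glue}), so invoking it to prove the theorem in your ordering is circular unless you supply that extra argument. There is also a smaller omission: the lemma concerns $(-)^\amalg$ on $\mathcal{S}$, while you need its sliced version $\mathcal{S}_{/B\mathrm{Top}(n)}\to(\mathcal{O}\mathsf{p}_\infty)_{/B\mathrm{Top}(n)^\amalg}$; this transfer is Proposition \ref{prop:uLan}, which you never invoke. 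The repair is to drop the ``preserves all small colimits'' reformulation and verify the left Kan extension condition only at the canonical diagrams: the image of $B\mathrm{Top}(n)$ under $j$ is $\mathcal{D}_{/B\mathrm{Top}(n)}$, Proposition \ref{prop:uLan} plus Lemma \ref{lem:key} show that the composite $\mathcal{S}_{/B\mathrm{Top}(n)}\xrightarrow{(-)^\amalg}(\mathcal{O}\mathsf{p}_\infty)_{/B\mathrm{Top}(n)^\amalg}\to\mathcal{O}\mathsf{p}_\infty$ is a universal left Kan extension of its restriction to $\mathcal{D}_{/B\mathrm{Top}(n)}$, the universality is exactly what allows pulling back along $\iota$ while staying a colimit diagram, and the forgetful functor $(\mathcal{O}\mathsf{p}_\infty)_{/\mathbb{E}_{B\mathrm{Top}(n)}^\otimes}\to\mathcal{O}\mathsf{p}_\infty$ preserves colimits. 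That is precisely the paper's proof.
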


The proof relies on the following lemma, which we will prove in Section
\ref{sec:lemma}.
\begin{lem}
\label{lem:key}Let $\cal D\subset\cal S$ denote the full subcategory
spanned by the contractible Kan complexes. The functor
\[
\pr -^{\amalg}:\cal S\to\Op_{\infty}
\]
is a universal left Kan extension of $\pr -^{\amalg}\vert\cal D$.
\end{lem}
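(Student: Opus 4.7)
The plan is to verify directly that for every Kan complex $X$, the canonical diagram
\[
\cal D_{/X}^\rcone \to \Op_\infty, \quad \pr{C \to X} \mapsto C^\amalg, \quad \infty \mapsto X^\amalg,
\]
is a universal colimit diagram. First I would exploit the fact that $\cal D$ is equivalent to a point, since every contractible Kan complex is weakly equivalent to $\mathrm{pt}$, so the forgetful functor $\cal D_{/X} \to X$ is an equivalence of $\infty$-categories. This identifies the displayed diagram with the constant diagram at $\mathrm{pt}^\amalg \simeq \Comm^\t$ indexed by $X$, extended by the cone $X^\amalg$ via the canonical inclusions $\mathrm{pt}_x^\amalg \hookrightarrow X^\amalg$. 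The task thus reduces to showing that $X^\amalg$ is the \emph{universal} colimit of this constant diagram.

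The colimit property (at the non-universal level) follows from the natural equivalence
\[
\Map_{\Op_\infty}\pr{X^\amalg, \cal O^\t} \simeq \Map_{\cal S}\pr{X, \CA\pr{\cal O^\t}^\simeq},
\]
which exhibits $X^\amalg$ as the copower $X \otimes \Comm^\t$ in the $\cal S$-tensored $\infty$-category $\Op_\infty$. I plan to derive this by directly constructing the natural map using the simplicial-level definition of $X^\amalg$: an $\infty$-operad morphism $X^\amalg \to \cal O^\t$ restricts at each $x \in X$ to a commutative algebra $A_x \in \CA\pr{\cal O^\t}$, and the multimorphism spaces of $X^\amalg$ (which are products of path spaces of $X$) carry exactly the homotopy-coherent functoriality data needed to assemble these into a map $X \to \CA\pr{\cal O^\t}^\simeq$; unwinding the definition of inert-preserving morphisms against the fiber structure of $X^\amalg$ shows that this construction is an equivalence.

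For universality, given any map $\cal P^\t \to X^\amalg$, one must show that $\cal P^\t$ is the colimit of the pullback diagram $C \mapsto \cal P^\t \times_{X^\amalg} C^\amalg$. The plan is to leverage a straightening equivalence $\pr{\Op_\infty}_{/X^\amalg} \simeq \Fun\pr{X, \Op_\infty}$, valid because $X$ is a Kan complex; under this equivalence, $\cal P^\t \to X^\amalg$ corresponds to a functor $F_{\cal P}: X \to \Op_\infty$ with $F_{\cal P}\pr x = \cal P^\t_x$ the fiber over $\mathrm{pt}_x^\amalg$, and pullback along any contractible $C \to X$ evaluates $F_{\cal P}$ at the image point. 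Consequently the pullback diagram is, up to equivalence, $F_{\cal P}$ itself, whose colimit in $\Op_\infty$ is $\cal P^\t$ by a second application of the straightening equivalence.

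The principal technical obstacle I anticipate is this straightening equivalence $\pr{\Op_\infty}_{/X^\amalg} \simeq \Fun\pr{X, \Op_\infty}$; the most natural route is to exhibit $X^\amalg \to \Comm^\t$ as an appropriate cocartesian family of $\infty$-operads over the Kan complex $X$, then invoke Lurie's general straightening--unstraightening machinery for such fibrations. Once in hand, this equivalence subsumes the earlier colimit assertion as well, since $X^\amalg$ then corresponds to the constant family at $\Comm^\t$ and colimits in $\Fun\pr{X, \Op_\infty}$ are computed pointwise.
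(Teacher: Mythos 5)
Your reduction of the diagram to the constant diagram at $\Comm^{\t}$ indexed by $X$ (via $\cal D_{/X}\simeq X$ and the terminality of $\Comm^{\t}$) is fine, and your route to the bare colimit statement via $\Alg_{X^{\amalg}}\pr{\cal O}\simeq\Fun\pr{X,\CA\pr{\cal O}}$ (HA Theorem 2.4.3.18), exhibiting $X^{\amalg}$ as the copower $X\otimes\Comm^{\t}$, is a legitimately simpler argument for that part than the paper's. The gap is in the universality step, which is the actual content of the lemma. Your key claim, an equivalence $\pr{\Op_{\infty}}_{/X^{\amalg}}\simeq\Fun\pr{X,\Op_{\infty}}$ under which pullback to points becomes evaluation, is not a straightening statement: it asserts that the colimit $X^{\amalg}\simeq\colim_{X}\Comm^{\t}$ satisfies descent (is van Kampen), which is \emph{strictly stronger} than the universality you are trying to prove, and you offer no proof of it. The proposed justification---exhibiting ``$X^{\amalg}\to\Comm^{\t}$ as a cocartesian family of $\infty$-operads over $X$''---does not parse: a family of $\infty$-operads over $X$ is a map to $X\times N\pr{\Fin_{\ast}}$, and $X^{\amalg}$ carries no such structure (its fiber over $\inp 0$ is a point and its fiber over $\inp 2$ is $X\times X$, with no preferred projection to $X$). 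Straightening identifies $\Fun\pr{X,\Op_{\infty}}$ with \emph{families} of $\infty$-operads over $X$, not with the slice $\pr{\Op_{\infty}}_{/X^{\amalg}}$; by the adjunction between the underlying-$\infty$-category functor and $\pr -^{\amalg}$, the slice is instead equivalent to $\Op_{\infty}\times_{\Cat_{\infty}}\pr{\Cat_{\infty}}_{/X}$, i.e.\ $\infty$-operads equipped with a functor from the underlying $\infty$-category to $X$, and the analogous slice-versus-functor-category statement in $\Cat_{\infty}$ over a Kan complex (e.g.\ $\Delta^{1}\to B\bb Z$ graded by $1$) is false, so your claim cannot be taken for granted. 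Passing between families over $X$ and objects over $X^{\amalg}$ is exactly the assembly problem that the paper's Subsections \ref{subsec:univ_weq}--\ref{subsec:key} are built to handle (Proposition \ref{prop:univ_colim_criterion}, Proposition \ref{prop:universal_assembly}, Lemma \ref{lem:K^=00005Camalg_approx}), so your proposal in effect assumes the hard part.

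There is a second, related gap at the end: even granting the slice equivalence, knowing that $\cal P^{\t}\to X^{\amalg}$ corresponds to a functor $F_{\cal P}$ does not identify $\colim F_{\cal P}$ with $\cal P^{\t}$. The remark that ``colimits in $\Fun\pr{X,\Op_{\infty}}$ are computed pointwise'' concerns colimits of diagrams of functors and says nothing about recovering the total object from its fibers; recovering $\cal P^{\t}$ as the colimit of the pulled-back diagram is precisely the van Kampen/effectivity statement again, so as written this step is circular. To repair the argument along your lines you would have to prove the descent statement directly, which forces essentially the same fiberwise analysis of maps of generalized $\infty$-operads (HA Theorem 2.3.3.23 and its refinements) that the paper carries out.
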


\begin{proof}
[Proof of Theorem \ref{thm:main}, assuming Lemma \ref{lem:key}]Let
$y:B\Top\pr n\to\Fun\pr{B\Top\pr n^{\op},\cal S}$ denote the Yoneda
embedding. By construction, the diagram of $\infty$-categories 
\[\begin{tikzcd}
	{B\mathrm{Top}(n)} \\
	{\operatorname{Fun}(B\mathrm{Top}(n)^{\mathrm{op}},\mathcal{S})} & {\mathcal{S}_{/B\mathrm{Top}(n)}} & {\mathcal{O}\mathsf{p}_\infty}
	\arrow["y"', from=1-1, to=2-1]
	\arrow["{\mathbb{E}^\otimes _{\mathbb{R}^n}}", from=1-1, to=2-3]
	\arrow["\simeq"', from=2-1, to=2-2]
	\arrow["{\mathbb{E}_\bullet^\otimes }"', from=2-2, to=2-3]
\end{tikzcd}\]commutes up to natural equivalence. Therefore, it will suffice to
show that the functor $\bb E_{\bullet}^{\t}:\cal S_{/B\Top\pr n}\to\Op_{\infty}$
is a left Kan extension of its restriction to $B\Top\pr n$. 

Let $\cal D\subset\cal S$ denote the full subcategory spanned by
the contractible Kan complexes. Then the essential image of the composite
$B\Top\pr n\xrightarrow{y}\Fun\pr{B\Top\pr n^{\op},\cal S}\xrightarrow{\simeq}\cal S_{/B\Top\pr n}$
is equal to $\cal D_{/B\Top\pr n}$. Since the Yoneda embedding is
fully faithful, this implies that the functor $B\Top\pr n\to\cal S_{/B\Top\pr n}$
restricts to a categorical equivalence $B\Top\pr n\xrightarrow{\simeq}\cal D_{/B\Top\pr n}$.
Therefore, we are reduced to showing that the functor $\bb E_{\bullet}^{\t}:\cal S_{/B\Top\pr n}\to\Op_{\infty}$
is a left Kan extension of $\bb E_{\bullet}^{\t}\vert\cal D_{/B\Top\pr n}$.
Let $\iota:\bb E_{B\Top\pr n}^{\t}\to B\Top\pr n^{\amalg}$ denote
the inclusion. By Remark \ref{rem:amalg}, the functor $\bb E_{\bullet}^{\t}$
is naturally equivalent to the composite
\[
\cal S_{/B\Top\pr n}\xrightarrow{\pr -^{\amalg}}\pr{\Op_{\infty}}_{/B\Top\pr n^{\amalg}}\xrightarrow{\iota^{*}}\pr{\Op_{\infty}}_{/\bb E_{B\Top\pr n}^{\t}}\xrightarrow{U}\Op_{\infty},
\]
where $U$ denotes the forgetful functor. According to Proposition
\ref{prop:uLan} and Lemma \ref{lem:key}, the composite 
\[
\cal S_{/B\Top\pr n}\xrightarrow{\pr -^{\amalg}}\pr{\Op_{\infty}}_{/B\Top\pr n}\to\Op_{\infty}
\]
is a universal left Kan extension of $\cal D_{/B\Top\pr n}$, where
$\cal D\subset\cal S$ denotes the full subcategory spanned by the
contractible Kan complexes. It follows that the functor $\iota^{*}\circ\pr -^{\amalg}:\cal S_{/B\Top\pr n}\to\pr{\Op_{\infty}}_{/\bb E_{B\Top\pr n}^{\t}}$
is a left Kan extension of its restriction to $\cal D_{/B\Top\pr n}$.
Since $U$ preserves small colimits, we are done.
\end{proof}

\subsection{\label{subsec:glue}Matsuoka's Gluing Theorem}

As an application of Theorem \ref{thm:intro1}, we give an alternative
proof of Matsuoka's gluing theorem on locally constant factorization
algebras. We say that a functor $F:\Mfld_{n}\to\cal C$ of $\infty$-categories
is a \textbf{cosheaf }if for each $n$-manifold $M$ and each open
cover $\cal U$ of $M$ which is downward-closed (i.e., if $U\in\cal U$,
then every open set of $U$ belongs to $\cal U$), the map $\colim_{U\in N\pr{\cal U}}FU\to FM$
is an equivalence.\footnote{Cosheaves on $\Mfld_{n}$ with values in $\infty$-categories with
small colimits admit various characterizations, such as being left
Kan extended from their restriction to $B\Top\pr n$, or being ($1$-)excisive
and exhaustive. For a proof, see \cite[Theorem 5.3]{A24b}.} 
\begin{cor}
\cite{Matsu17}\label{cor:FA_glue} The functor
\[
\bb E_{\bullet}^{\otimes}:\Mfld_{n}\to\Op_{\infty}
\]
(of Definition \ref{def:E_M}) is a cosheaf.
\end{cor}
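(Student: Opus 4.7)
The plan is to decompose $\bb E_\bullet^\otimes$ via the tangent classifier and reduce to a colimit-preservation property of $\bb E_\bullet^\otimes : \cal S_{/B\Top\pr n} \to \Op_\infty$ together with a cosheaf property of $\tau$. By Definition \ref{def:E_M}, the functor $\bb E_\bullet^\otimes : \Mfld_n \to \Op_\infty$ factors as
$$\Mfld_n \xrightarrow{\tau} \cal S_{/B\Top\pr n} \xrightarrow{\bb E_\bullet^\otimes} \Op_\infty.$$
The cosheaf condition will therefore follow from two statements: (i) $\bb E_\bullet^\otimes : \cal S_{/B\Top\pr n} \to \Op_\infty$ preserves small colimits; and (ii) for each $n$-manifold $M$ and each downward-closed open cover $\cal U$ of $M$, the cocone $N\pr{\cal U}^{\rcone} \to \cal S_{/B\Top\pr n}$ induced by $\tau$ is a colimit diagram.

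For (i), I invoke Theorem \ref{thm:main}. Under the unstraightening equivalence $\Fun\pr{B\Top\pr n^{\op}, \cal S} \simeq \cal S_{/B\Top\pr n}$, the theorem identifies $\bb E_\bullet^\otimes : \cal S_{/B\Top\pr n} \to \Op_\infty$ with the left Kan extension of $\bb E_{\bb R^n}^\otimes : B\Top\pr n \to \Op_\infty$ along the Yoneda embedding. Such a left Kan extension is left adjoint to restriction along Yoneda, hence preserves small colimits.

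For (ii), the forgetful functor $\cal S_{/B\Top\pr n} \to \cal S$ is a left adjoint (its right adjoint sends $Y$ to the projection $Y \times B\Top\pr n \to B\Top\pr n$), so it preserves small colimits; and since equivalences in a slice $\infty$-category are detected on underlying objects, it reflects equivalences. By Remark \ref{rem:direct_computation}, $\tau\pr U$ is the right fibration $B\Top\pr n_{/U} \to B\Top\pr n$, whose fiber over $\bb R^n$ is $\Emb\pr{\bb R^n, U}$. Kister's theorem identifies $\Emb\pr{\bb R^n, U} \to U$ with a principal $\Top\pr n$-bundle, so the total space of $\tau\pr U$ in $\cal S$ is equivalent to $U$, with the map to $B\Top\pr n$ being the tangent microbundle classifier. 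Thus (ii) reduces to the purely topological statement $\colim_{U \in N\pr{\cal U}} U \simeq M$ in $\cal S$, a standard consequence of descent for open covers closed under finite intersections (a property automatic for downward-closed covers).

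The main obstacle is (ii). The identification of $\tau\pr U$'s underlying space with $U$ via Kister's theorem must be checked to be natural in the inclusions $U \hookrightarrow U'$, so that the whole diagram $\tau\vert N\pr{\Open\pr M}$ is equivalent (after forgetting to $\cal S$) to the diagram of underlying opens. Once this naturality is established and combined with the topological descent fact, the argument from (i) closes the proof.
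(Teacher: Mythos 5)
Your step (i) coincides with the paper's: Theorem \ref{thm:main} together with the fact that a functor out of a presheaf $\infty$-category which is left Kan extended along the Yoneda embedding into a cocomplete target preserves small colimits (the paper cites \cite[Theorem 5.1.5.6]{HTT}). Be careful with your justification, though: the colimit-preservation holds because such a functor admits a right adjoint, namely $\cal O^{\t}\mapsto\Map_{\Op_{\infty}}\pr{\bb E_{\bb R^{n}}^{\t}\pr -,\cal O^{\t}}$, not because it is ``left adjoint to restriction along Yoneda.'' The conclusion you need is correct either way.

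Step (ii) is where you genuinely diverge from the paper, and where your plan still has a real hole --- the one you yourself flag. The paper never identifies the total space of $\tau\pr U$ with $U$: it works in $\Fun\pr{B\Top\pr n^{\op},\cal S}$, where colimits are computed pointwise, so the cosheaf property of $\tau$ reduces to the single statement that $\Sing\Emb\pr{\bb R^{n},-}:\Mfld_{n}\to\cal S$ is a cosheaf; that statement is then proved from the pullback square $\Sing\Emb\pr{\bb R^{n},U}\simeq\Sing\Emb\pr{\bb R^{n},M}\times_{\Sing M}\Sing U$ (evaluation at the origin), universality of colimits in $\cal S$, and Seifert--van Kampen \cite[Theorem A.3.1]{HA}. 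Your route forgets down to $\cal S$ (legitimate: the forgetful functor is colimit-preserving and conservative between presentable $\infty$-categories, hence detects colimits) and then requires a natural equivalence between the diagrams $U\mapsto\Sing\Emb\pr{\bb R^{n},U}_{h\Top\pr n}$ and $U\mapsto\Sing U$ over $N\pr{\cal U}^{\rcone}$. This is true and standard in spirit, but it is not a formality in the model of $\tau$ used here: evaluation at the origin is not strictly $\Top\pr n$-equivariant (precomposition by a self-embedding moves the basepoint), so the comparison map does not exist on the nose and must be built through germs or origin-preserving embeddings plus Kister's theorem, together with a coherence argument over all inclusions $U\hookrightarrow U'$. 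Producing that coherent identification is comparable in difficulty to the reduction the paper performs; the pointwise-in-presheaves plus universality-of-colimits argument is exactly the device that lets the paper avoid identifying any homotopy quotients with the manifolds. So your skeleton is sound and partially different from the paper's, but as written the proof is incomplete until the naturality of the Kister identification is actually established.
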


\begin{proof}
By Theorem \ref{thm:intro1} and \cite[Theorem 5.1.5.6]{HTT}, the
functor $\bb E_{\bullet}^{\t}:N\pr{\pr{\SS_{/B\Top\pr n}}^{\circ}}\to\Op_{\infty}$
preserves small colimits. Therefore, it suffices to show that the
tangent classifier functor $\tau:\Mfld_{n}\to N\pr{\pr{\SS_{/B\Top\pr n}}^{\circ}}$
is a cosheaf. As observed in Remark \ref{rem:tangent_classifier},
the composite $\Mfld_{n}\xrightarrow{\tau}N\pr{\pr{\SS_{/B\Top\pr n}}^{\circ}}\xrightarrow{\simeq}\cal S_{/B\Top\pr n}$
is naturally equivalent to the composite
\[
\Mfld_{n}\xrightarrow{y}\Fun\pr{\Mfld_{n}^{\op},\cal S}\xrightarrow{i^{*}}\Fun\pr{B\Top\pr n^{\op},\cal S}\simeq\cal S_{/B\Top\pr n},
\]
where $y$ denotes the Yoneda embedding, the second functor is the
restriction along the inclusion $i:B\Top\pr n\hookrightarrow\Mfld_{n}$,
and the last equivalence is the unstraightening equivalence. It will
therefore suffice to show that the composite $i^{*}\circ y$ is a
cosheaf (Remark \ref{rem:tangent_classifier}). Since colimits in
functor categories can be computed pointwise \cite[\href{https://kerodon.net/tag/02XK}{Tag 02XK}]{kerodon},
it suffices to show that the composite
\[
\Mfld_{n}\xrightarrow{y}\Fun\pr{\Mfld_{n}^{\op},\cal S}\xrightarrow{i^{*}}\Fun\pr{B\Top\pr n^{\op},\cal S}\xrightarrow{\opn{ev}_{\bb R^{n}}}\cal S
\]
is a cosheaf. In other words, we are reduced to showing that the functor
$\Sing\Emb\pr{\bb R^{n},-}:\Mfld_{n}\to\cal S$ is a cosheaf.

Let $M$ be an $n$-manifold and let $\cal U$ be an open cover of
$M$ which is downward-closed. We wish to show that the map
\[
\colim_{U\in N\pr{\cal U}}\Sing\Emb\pr{\bb R^{n},U}\to\Sing\Emb\pr{\bb R^{n},M}
\]
is an equivalence. According to \cite[Proposition 2.19]{A24b}, for
each $U\in\cal U$, the evaluation at the origin determines a pullback
square 
\[\begin{tikzcd}
	{\operatorname{Sing}\operatorname{Emb}(\mathbb{R}^n,U)} & {\operatorname{Sing}\operatorname{Emb}(\mathbb{R}^n,M)} \\
	{\operatorname{Sing}U} & {\operatorname{Sing}M}
	\arrow[from=1-1, to=1-2]
	\arrow[from=1-2, to=2-2]
	\arrow[from=1-1, to=2-1]
	\arrow[from=2-1, to=2-2]
\end{tikzcd}\]in $\cal S$. Since colimits in $\cal S$ are universal \cite[Lemma 6.1.3.14]{HTT},
we are reduced to showing that the map
\[
\colim_{U\in N\pr{\cal U}}\Sing U\to\Sing M
\]
is an equivalence. This follows from \cite[Theorem A.3.1]{HA}.
\end{proof}

\section{\label{sec:lemma}Proof of Lemma \ref{lem:key}}

The goal of this section is to prove Lemma \ref{lem:key}, which asserts
that the functor $\pr -^{\amalg}:\cal S\to\Op_{\infty}$ is a universal
left Kan extension of its restriction to the full subcategory of contractible
Kan complexes. 
\begin{warning}
(Casual readers may safely ignore this warning.) There are two conflicting
conventions of the homotopy coherent nerve functor:
\begin{enumerate}
\item The homotopy coherent nerve functor $N_{I}$, defined as in \cite{HTT}.
\item The homotopy coherent nerve functor $N_{II}$, defined as follows:
If $\cal C$ is a simplicial category, we set $N_{II}\pr{\cal C}=N_{I}\pr{\cal C^{c}}$,
where $\cal C^{c}$ denotes the simplicial category obtained from
$\cal C$ by replacing each hom-simplicial sets by its opposites.
(This is the convention adopted in \cite{kerodon}.)
\end{enumerate}
As we remarked in Section \ref{sec:intro}, this paper generally follows
\cite{HTT, HA} in its terminology and notation. This means that,
so far, we have adopted convention (1). However, in \cite{A24c},
which we will frequently refer to below, the author used convention
(2) (as it seemed more natural to do so\footnote{A rule of thumb is that when we want to consider straightening--unstraightening
of cartesian fibrations, we should use $N_{I}$, while we consider
that of cocartesian fibrations, we should use $N_{II}$.}). Because of this, we will henceforth switch to convention (2). Thus,
from now on, the $\infty$-categories such as $\cal S$, $\Cat_{\infty}$,
$\Op_{\infty}$ will be defined by applying the functor $N_{II}$
to the simplicial categories $\SS^{\circ}$, $\pr{\SS^{+}}^{\circ}$,
and $\Op_{\infty}^{\Delta}$. Note that this is allowed as far as
Lemma \ref{lem:key} is concerned, for the validity of the lemma does
not depend on the choice of the convention.
\end{warning}

\subsection{\label{subsec:recollections}Recollections}

In this subsection, we review some results on categorical patterns
\cite[Appendix B]{HA} that are proved in \cite{A24c}. 
\begin{defn}
\cite[Definition B.0.19]{HA} Let $S$ be a simplicial set. A \textbf{categorical
pattern} $\frak P=\pr{M_{S},T,\{p_{\alpha}\}_{\alpha\in A}}$ on $S$
consists of a set $M_{S}$ of edges of $S$ containing all degenerate
edges, a set $T$ of $2$-simplices of $S$ containing all degenerate
$2$-simplices, and a set $\{p_{\alpha}:K_{\alpha}^{\lcone}\to S\}_{\alpha\in A}$
of diagrams of $S$ such that $p_{\alpha}$ carries all edges and
$2$-simplices of $K_{\alpha}^{\lcone}$ into $M_{S}$ and $T$, respectively.

In the case where $T$ contains all $2$-simplices of $S$, we will
omit $T$ from the notation and write $\frak P=\pr{M_{S},\{p_{\alpha}\}_{\alpha\in A}}$.
(All categorical patterns we consider in this paper are of this form.)
If further $S$ is an $\infty$-category and $M_{S}$ contains all
equivalences of $S$, we say that $\frak P$ is a \textbf{commutative
categorical pattern} \cite[Definition 2.14]{A24c}.
\end{defn}

\begin{defn}
\cite[Definition B.0.19]{HA}, \cite[Remark 2.5]{A24c}\label{def:P-fib}
Let $\frak P=\pr{M_{S},\{p_{\alpha}\}_{\alpha\in A}}$ be a categorical
pattern on a simplicial set $S$. A map $p:X\to S$ of simplicial
sets is said to be \textbf{$\frak P$-fibered} if it satisfies the
following conditions:
\begin{enumerate}
\item The map $p$ is an inner fibration.
\item For every edge $s\to s'$ in $M_{S}$ and every vertex $x\in X$ lying
over $s$, there is a $p$-cocartesian edge $x\to x'$ lying over
$M_{S}$.
\item Each $p_{\alpha}$ lifts to a map $\widetilde{p}_{\alpha}:K_{\alpha}^{\lcone}\to X$
which carries all edges to $p$-cocartesian edges. Moreover, any such
lift is a $p$-limit diagram.
\end{enumerate}
If $p$ satisfies these conditions, we will write $X_{\natural}$
for the marked simplicial set obtained from $X$ by marking the $p$-cocartesian
morphisms whose images in $S$ belong to $M_{S}$. 
\end{defn}

The totality of $\frak P$-fibered objects can be organized into an
$\infty$-category, because of the following theorem:
\begin{thm}
\cite[Theorem B.0.20]{HA} Let $\frak P=\pr{M_{S},\{p_{\alpha}\}_{\alpha\in A}}$
be a categorical pattern on a simplicial set $S$. There is a combinatorial
simplicial model structure on $\SS_{/\pr{S,M_{S}}}^{+}$, denoted
by $\SS_{/\frak P}^{+}$, whose cofibrations are the monomorphisms
and whose fibrant objects are the objects of the form $X_{\natural}$,
where $X\to S$ is $\frak P$-fibered.
\end{thm}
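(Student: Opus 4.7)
The plan is to invoke Smith's recognition theorem for combinatorial model categories \cite[Proposition A.2.6.13]{HTT}. I need to specify a locally presentable underlying category (clear: $\SS_{/\pr{S,M_{S}}}^{+}$), a set $I$ of generating cofibrations, a set $J$ of generating trivial cofibrations, and a class $W$ of weak equivalences satisfying accessibility, two-out-of-three, containment of the saturated hull of $J$ in $W$, and the property that every map with the right lifting property against $I$ lies in $W$.

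For $I$, I take pullbacks to $\pr{S,M_{S}}$ of the standard generating monomorphisms of $\SS^{+}$: the boundary inclusions $\pr{\partial\Delta^{n},\emptyset}\to\pr{\Delta^{n},\emptyset}$ together with $\pr{\Delta^{1},\emptyset}\to\pr{\Delta^{1},\sharp}$. For $J$, I include three families, one for each clause of Definition \ref{def:P-fib}: inner horn inclusions over $S$ (for inner fibration); marked anodyne maps adapted from Lurie's cocartesian model structure on $\SS_{/S}^{+}$ \cite[\S 3.1]{HTT} (forcing cocartesian lifts over edges in $M_{S}$ and the correct marking); and, for each diagram $p_{\alpha}:K_{\alpha}^{\lcone}\to S$, a carefully chosen small set of maps that forces any lift of $p_{\alpha}$ carrying edges to cocartesian morphisms to be a $p$-limit diagram. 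The class $W$ I define externally: $f:X\to Y$ lies in $W$ iff the induced map $\Map_{/\pr{S,M_{S}}}\pr{Y,Z_{\natural}}\to\Map_{/\pr{S,M_{S}}}\pr{X,Z_{\natural}}$ of marked simplicial mapping spaces is a categorical equivalence for every $\frak P$-fibered $Z\to S$. Two-out-of-three is immediate, and accessibility of $W$ follows from the fact that $\frak P$-fibered objects form an accessible full subcategory, since conditions (1)--(3) of Definition \ref{def:P-fib} are each expressible by solutions to small lifting problems.

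The main obstacle is to show that the saturated hull of $J$ lies in $W$. The first two families are handled by adapting Lurie's analysis of marked anodyne maps: pushouts along them do not affect the mapping space into any $\frak P$-fibered target, which is exactly the statement that marked anodyne morphisms are trivial cofibrations in the cocartesian model structure. The third family is the delicate case: I must prove that for any $\frak P$-fibered $Z\to S$, the space of lifts of $p_{\alpha}$ to $Z$ subject to the cocartesian-and-$p$-limit conditions is contractible, not merely nonempty. This reduces, via the slice-$\infty$-category characterization of $p$-limits, to verifying that a certain comparison map of right fibrations over $K_{\alpha}$ is a trivial fibration; Joyal's lifting theorem combined with a fibrewise equivalence analysis and a compatibility check against the marking closes the argument.

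Once these axioms are in place, Smith's theorem produces the combinatorial model structure. The identification of fibrant objects with the objects of the form $X_{\natural}$ for $X\to S$ that is $\frak P$-fibered is then immediate from the very definition of $J$: right lifting against $J$ is a literal reformulation of conditions (1)--(3) of Definition \ref{def:P-fib}, and a map with the right lifting property against $I$ restricted to $I$-elements with empty marking is a trivial Kan fibration on underlying simplicial sets, which upgrades to a trivial fibration in our model structure. The simplicial enrichment is inherited from the canonical simplicial tensoring of $\SS^{+}$ by $\SS$, and its compatibility with the model structure follows from a standard Leibniz-product verification applied to $I$ and $J$.
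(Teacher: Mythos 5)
You should note first that the paper does not prove this statement at all: it is quoted verbatim from \cite[Theorem B.0.20]{HA}, so the only possible comparison is with Lurie's proof in Appendix B of \cite{HA}, which proceeds via the theory of $\frak P$-anodyne morphisms and the existence criterion of \cite[$\S$A.2.6]{HTT}, not via a bare appeal to Smith's theorem with a hand-specified set $J$. Measured against that, your sketch has two genuine gaps. First, the hypotheses you list for the recognition theorem (accessibility of $W$, two-out-of-three, $\mathrm{cell}(J)\subseteq W$, $\mathrm{inj}(I)\subseteq W$) are not sufficient to produce a model structure. The version in \cite[Proposition A.2.6.13]{HTT} also requires $W$ to be perfect and stable under pushouts along $I$-cofibrations (equivalently, in Kan's form of the recognition theorem with a chosen $J$, one must prove $W\cap\mathrm{cof}(I)\subseteq\mathrm{cof}(J)$). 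For categorical patterns this stability of $\frak P$-equivalences under pushout and filtered colimits is precisely the technical heart of Lurie's Appendix B, established through pushout-product lemmas for $\frak P$-anodyne maps and invariance properties of the marked mapping spaces $\Map^{\sharp}_{S}\pr{-,Z_{\natural}}$; your proposal addresses only the easier half ($\mathrm{cell}(J)\subseteq W$) and never confronts this.

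Second, the claim that the identification of the fibrant objects with the $X_{\natural}$ for $\frak P$-fibered $X\to S$ is ``immediate from the very definition of $J$'' is unjustified. In a Smith-type construction the fibrant objects are those with the right lifting property against \emph{all} of $\mathrm{cof}(I)\cap W$, not against $J$; identifying them with the $J$-injective objects requires showing that $J$ generates the trivial cofibrations, which is exactly the missing verification above. (In Lurie's treatment the analogous explicit class, the $\frak P$-anodyne maps, is shown to consist of trivial cofibrations but is \emph{not} shown to generate them; the fibrant-object identification is a separate, substantive argument.) Moreover, encoding clause (3) of Definition \ref{def:P-fib} as injectivity against a small set is itself delicate: lifting properties assert the existence of fillers, whereas that clause quantifies over all cocartesian lifts of $p_{\alpha}$ and imposes a $p$-limit condition on each; making this an injectivity condition requires the marking bookkeeping together with lemmas that any two cocartesian lifts agree up to equivalence and that the $p$-limit property is preserved under such equivalences. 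The same issue undermines the final step: the SM7/pushout-product axiom must be checked against trivial cofibrations, i.e.\ against $\mathrm{cof}(I)\cap W$, so a Leibniz verification on $I$ and $J$ alone does not suffice without the generation statement. In short, the strategy is reasonable in outline, but the two verifications that constitute the actual content of \cite[Appendix B]{HA} are absent.
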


\begin{defn}
Let $\frak P=\pr{M_{S},\{p_{\alpha}\}_{\alpha\in A}}$ be a categorical
pattern on a simplicial set $S$. We write $\frak P\-\Fib$ for the
homotopy coherent nerve of the full simplicial subcategory of $\SS_{/\frak P}^{+}$
spanned by the fibrant-cofibrant objects.
\end{defn}

\begin{example}
\label{exa:oo-operad}For each $n\geq0$, let $\rho_{n}:\pr{\{1,\dots,n\}}^{\lcone}\to N\pr{\Fin_{\ast}}$
denote the functor which classifies the $n$ inert morphisms $\inp n\to\inp 1$.
(When $n=0$, the diagram $\rho_{n}$ classifies the object $\inp 0\in N\pr{\Fin_{\ast}}$.)
A functor $\cal E\to N\pr{\Fin_{\ast}}$ is fibered over the categorical
pattern $\frak{Op}=\pr{\{\text{inert maps}\},\{\rho_{n}\}_{n\geq0}}$
if and only if it is an $\infty$-operad. By definition, we have $\frak{Op}\-\Fib=\Op_{\infty}$.
\end{example}

Just like the ordinary straightening--unstraightening, functors with
values in $\frak P\-\Fib$ can equivalently be specified by a fibrational
structure, which we now review.
\begin{defn}
\label{def:P-bundle}\cite[Definition 3.1]{A24c} Let $\frak P=\pr{M_{\cal D},\{p_{\alpha}\}_{\alpha\in A}}$
be a commutative categorical pattern on an $\infty$-category $\cal D$,
and let $S$ be a simplicial set. A \textbf{$\frak P$-bundle }(\textbf{over
}$S$) is a commutative diagram 
\[\begin{tikzcd}
	X && {S\times \mathcal{D}} \\
	& S
	\arrow["p", from=1-1, to=1-3]
	\arrow["{\operatorname{pr}}", from=1-3, to=2-2]
	\arrow["q"', from=1-1, to=2-2]
\end{tikzcd}\]of simplicial sets which satisfies the following conditions:

\begin{enumerate}[label=(\alph*)]

\item The map $q:X\to S$ is a cocartesian fibration.

\item The map $p$ lifts to a fibration of fibrant objects of $\SS_{/S}^{+}$
with respect to the cocartesian model structure.

\item For each vertex $v\in S$, the map $X_{v}=X\times_{S}\{v\}\to\cal D$
is $\frak P$-fibered.

\item For each edge $f:v\to v'$ in $S$, the induced functor $f_{!}:X_{v}\to X_{v'}$
is a morphism of $\frak P$-fibered objects.

\end{enumerate}

We will often say that the map $p$ (or $X$) is a $\frak P$-bundle
over $S$. Given a $\frak P$-bundle $p:X\to S\times\cal D$, we will
write $X_{\natural}$ for the marked simplicial set obtained from
$X$ by marking the $p$-cocartesian edges whose images in $\cal D$
belong to $M_{\cal D}$. This does not conflict with the notation
in Definition \ref{def:P-fib}, because of the following reason: Let
$S\times\frak P$ denote the categorical pattern on $S\times\cal D$
given by
\[
S\times\frak P=\pr{S_{1}\times M_{\cal D},\{\{v\}\times p_{\alpha}\}_{v\in S_{0},\,\alpha\in A}}.
\]
We can show that the fibrant-cofibrant objects of $\SS_{/S\times\frak P}^{+}$
are precisely the objects of the form $\pr{X,M_{X}}\to S^{\sharp}\times\pr{\cal D,M_{D}}$,
where $X$ is a $\frak P$-bundle and $M_{X}$ is the set of $p$-cocartesian
edges whose images in $\cal D$ belong to $M_{\cal D}$ \cite[Proposition 3.5]{A24c}. 

We will write $\frak P\-\Bund\pr S$ for the homotopy coherent nerve
of the full simplicial subcategory of $\SS_{/S\times\frak P}^{+}$
spanned by the fibrant--cofibrant objects. 
\end{defn}

The following is the main result of \cite{A24c}:
\begin{thm}
\cite[Corollary 5.10]{A24c}\label{thm:A24cmain} Let $\frak P$ be
a commutative categorical pattern, and let $S$ be a small simplicial
set. There is a categorical equivalence
\[
\frak P\-\Bund\pr S\simeq\Fun\pr{S,\frak P\-\Fib}
\]
which lifts the ordinary straightening--unstraightening equivalence.
\end{thm}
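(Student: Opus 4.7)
The plan is to obtain this equivalence by extending Lurie's straightening--unstraightening Quillen equivalence for cocartesian fibrations to this fibered setting. The essential observation is that by Definition \ref{def:P-bundle}, a $\frak P$-bundle $p : X \to S \times \cal D$ packages together: a cocartesian fibration $X \to S$, a map $X \to \cal D$ which is fiberwise $\frak P$-fibered (condition (c)), and compatibility of cocartesian pushforward with the $\frak P$-fibered structure (condition (d)). This matches the informal idea that a $\frak P$-bundle is a family of $\frak P$-fibered objects parametrized by $S$, which is exactly what a simplicial functor $\fr C[S] \to (\SS_{/\frak P}^+)^\circ$ should encode.

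First, I would promote Lurie's straightening--unstraightening Quillen equivalence between $\SS_{/S}^+$ (cocartesian model structure) and $\Fun^s(\fr C[S], \SS^+)$ (projective model structure) to a Quillen equivalence
\[
\SS_{/S\times\frak P}^+ \;\rl\; \Fun^s\bigl(\fr C[S], \SS_{/\frak P}^+\bigr).
\]
The unstraightening functor builds a total space from a diagram of $\frak P$-fibered objects by applying Lurie's unstraightening to the underlying cocartesian fibration while remembering the compatible maps to $\cal D$; the straightening functor sends a $\frak P$-bundle to the diagram of its fibers equipped with their maps to $\cal D$ and appropriate markings. Here the key input is the explicit relative nerve description of the unstraightening functor, which makes transparent how the fiberwise structure over $\cal D$ assembles into a global map to $S \times \cal D$.

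Second, I would pass from the Quillen equivalence to the statement about $\infty$-categories by taking full simplicial subcategories on fibrant-cofibrant objects and applying $N_{II}$. The right-hand side becomes $N_{II}\bigl(\Fun^s(\fr C[S],(\SS_{/\frak P}^+)^\circ)\bigr)$, which is canonically equivalent to $\Fun(S, \frak P\-\Fib)$ via the standard identification of simplicial functors out of a rigidification with functors out of the homotopy coherent nerve. The left-hand side is $\frak P\-\Bund(S)$ by definition, using the characterization of the fibrant--cofibrant objects of $\SS_{/S\times\frak P}^+$ recorded in Definition \ref{def:P-bundle}. That the resulting equivalence lifts the ordinary straightening--unstraightening is then visible by construction, since forgetting the structure over $\cal D$ recovers precisely Lurie's equivalence.

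The main obstacle will be verifying that the Quillen adjunction restricts correctly to the $\frak P$-fibered setting. Two points require care. One must show that unstraightening a simplicial functor $F: \fr C[S] \to (\SS_{/\frak P}^+)^\circ$ yields an object satisfying all of conditions (a)--(d) of Definition \ref{def:P-bundle}: conditions (a) and (b) follow from Lurie's theory applied to the underlying cocartesian fibration, but conditions (c) and (d)---namely that each fiber is $\frak P$-fibered and that transition functors preserve cocartesian lifts over $M_{\cal D}$ and the limit diagrams $p_\alpha$---must be checked by a fiberwise analysis of the relative nerve construction. Dually, one must show that straightening a $\frak P$-bundle produces a functor landing in the fibrant objects of $\SS_{/\frak P}^+$, which amounts to propagating the lifting properties in Definition \ref{def:P-fib} through the explicit formula for $\St$. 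Once these compatibilities are established, the remaining verifications (preservation of cofibrations, trivial cofibrations, and detection of weak equivalences fiberwise) reduce to standard facts about the cocartesian model structure.
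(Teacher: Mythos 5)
You should first note that the paper does not prove this statement at all: it is imported verbatim from \cite[Corollary 5.10]{A24c}, so there is no internal proof to compare against, and your proposal has to be judged as a reconstruction of that reference. Your blueprint (lift straightening--unstraightening to a Quillen equivalence $\SS_{/S\times\frak P}^{+}\rightleftarrows\Fun^{s}\pr{\fr C[S],\SS_{/\frak P}^{+}}$ with the projective structure, then pass to fibrant--cofibrant objects and homotopy coherent nerves, using \cite[Proposition 3.5]{A24c} to identify the fibrant objects on the left with $\frak P$-bundles) is the natural one and is very likely close in spirit to what the reference does; the compatibility "lifting the ordinary equivalence'' would indeed be automatic from such a construction.

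However, as written the proposal has genuine gaps. First, the lifted adjunction is never actually constructed: for $X\to S^{\sharp}\times\overline{\cal D}$ the straightening $\St_{\phi}^{+}\pr X$ is a simplicial functor to $\SS^{+}$, and it admits a map to the \emph{constant} diagram at $\overline{\cal D}$ only via the comparison $\St_{\phi}^{+}\pr{X\times K}\simeq\St_{\phi}^{+}\pr X\times K$, which is a natural weak equivalence rather than an isomorphism; so the claimed left adjoint landing in $\Fun^{s}\pr{\fr C[S],\SS_{/\frak P}^{+}}$ does not exist on the nose, and some device (for instance working with the relative nerve over ordinary categories as in Example \ref{exa:relnerve} and reducing general $S$ to that case) is needed. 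Second, your description of straightening as "the diagram of fibers with their maps to $\cal D$'' is wrong at the model level: $\St^{+}$ is left Quillen, does not produce fibrant (i.e.\ $\frak P$-fibered) values, and recovers fibers only up to weak equivalence; likewise weak equivalences in $\SS_{/\frak P}^{+}$ and $\SS_{/S\times\frak P}^{+}$ are not defined fiberwise, so "detection of weak equivalences fiberwise'' is not a standard fact one can simply invoke. Third, and most seriously, the step you defer to "a fiberwise analysis of the relative nerve'' --- that unstraightening a projectively fibrant diagram of $\frak P$-fibered objects satisfies conditions (a)--(d) of Definition \ref{def:P-bundle}, in particular that the $p$-limit conditions in Definition \ref{def:P-fib}(3) survive the unstraightening construction, and that this exactly matches the fibrant objects of $\SS_{/S\times\frak P}^{+}$ --- is the actual content of the theorem (it subsumes the nontrivial input quoted as \cite[Proposition 3.5]{A24c}), not a routine verification. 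So the proposal is a plausible plan whose decisive steps remain unproved.
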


\begin{rem}
\label{rem:A24cmain}Let $\sf{CCP}$ denote the category whose objects
are the pairs $\pr{\cal D,\frak P}$, where $\cal D$ is an $\infty$-category
and $\frak P$ is a commutative categorical pattern on $\cal D$,
and whose morphisms $\pr{\cal D,\frak P}\to\pr{\cal D',\frak P'}$
are functors $\cal D\to\cal D'$ that carry each edge and diagram
in $\frak P$ into those of $\frak P$. Then the assignments $\pr{S,\pr{\cal D,\frak P}}\mapsto\frak P\-\Bund\pr S$
and $\pr{S,\pr{\cal D,\frak P}}\mapsto\Fun\pr{S,\frak P\-\Fib}$ determines
a functor $N\pr{\SS^{\op}\times\sf{CCP}^{\op}}\to\hat{\Cat}_{\infty}$,
and the equivalence of Theorem \ref{thm:A24cmain} can be promoted
to a natural equivalence of these functors. 
This follows from the proof of \cite[Corollary 5.10]{A24c} (and arguing
as in the proof of the naturality of the ordinary straightening--unstraightening
\cite[Appendix A]{GHN17}).

In the situation of Theorem \ref{thm:A24cmain}, we say that a $\frak P$-bundle
$p:X\to S\times\cal D$ is \textbf{classified} by a functor $f:S\to\frak P\-\Fib$
if the equivalence of Theorem \ref{thm:A24cmain} carries $p$ to
an object equivalent to $f$. The naturality property discussed in
the previous paragraph implies that classifying maps are compatible
with pullback in the following two senses:
\begin{itemize}
\item If $S'\to S$ is a map of simplicial sets, then the $\frak P$-bundle
$X\times_{S}S'\to S'\times\cal D$ is classified by the composite
$S'\to S\xrightarrow{f}\frak P\-\Fib$. 
\item If $\frak P'$ is a commutative categorical pattern on an $\infty$-category
$\cal D'$ and $\cal D'\to\cal D$ is a functor which carries each
edge and each diagram of $\frak P'$ into those of $\frak P$, then
the $p$-bundle $X\times_{\cal D}\cal D'\to S\times\cal D'$ is classified
by the composite $S\xrightarrow{f}\frak P\-\Fib\to\frak P'\-\Fib$.
\end{itemize}
\end{rem}

\begin{example}
\cite[Proposition 5.12]{A24c}\label{exa:relnerve} Let $\frak P=\pr{M_{\cal D},\{p_{\alpha}\}_{\alpha}}$
be a commutative categorical pattern on an $\infty$-category $\cal D$,
let $\cal C$ be an ordinary category, and let $F:\cal C\to\SS_{/\frak P}^{+}$
be a functor which carries every object to a fibrant object. Then
the nerve of $F$ classifies the $\frak P$-bundle
\[
\int_{\cal C}F_{\flat}\to\int_{\cal C}\delta\pr{\cal D}\cong N\pr{\cal C}\times\cal D,
\]
where $F_{\flat}$ denotes the composite $\cal C\to\SS_{/\frak P}^{+}\xrightarrow{\text{forget}}\SS$,
$\int_{\cal C}F_{\flat}$ denotes the relative nerve of $F_{\flat}$
\cite[$\S$3.2.5]{HTT}, and $\delta\pr{\cal D}:\cal C\to\SS$ denotes
the constant functor at $\cal D$.
\end{example}

As in ordinary straightening--unstraightening, we can use Theorem
\ref{thm:A24cmain} to give a criterion for a diagram in $\frak P\-\Fib$
to be a colimit diagram. The criterion relies on the following preliminary
construction.
\begin{defn}
\cite[Definition 6.7]{A24c}\label{def:refraction} Let $\frak P=\pr{M_{\cal D},\{p_{\alpha}\}_{\alpha\in A}}$
be a commutative categorical pattern on an $\infty$-category $\cal D$,
let $K$ be a simplicial set, and let $p':X'\to K^{\rcone}\times\cal D$
be a $\frak P$-bundle over $K^{\rcone}$. Set $X=X'\times_{K^{\rcone}}K$.
Regarding $X'$ and $X$ as $\frak P$-bundles over $K^{\rcone}$
and $K$, respectively, we define objects $X'_{\natural}\in\SS_{/K^{\rcone}\times\frak P}^{+}$
and $X_{\natural}\in\SS_{/K\times\frak P}^{+}$ as in Definition \ref{def:P-bundle}.
A map $\opn{Rf}:X_{\natural}\to X'_{\natural}\times_{\pr{K^{\rcone}}^{\sharp}}\{\infty\}^{\sharp}$
of $\SS_{/\frak P}^{+}$ is called a \textbf{refraction map} if there
is a morphism $H:\pr{\Delta^{1}}^{\sharp}\times X_{\natural}\to X'_{\natural}$
in $\SS_{/K^{\rcone}\times\frak P}^{+}$ satisfying the following
conditions:
\begin{enumerate}
\item The diagram
\[\begin{tikzcd}
	{\{0\}^\sharp\times X_\natural} && {X'_{\natural}} \\
	{(\Delta^1)^\sharp \times X_\natural} & {(\Delta^1)^{\sharp}\times (K)^\sharp\times \overline{\mathcal{D}}} & {(K^\triangleright)^\sharp\times \overline{\mathcal{D}}}
	\arrow[hook, from=1-1, to=2-1]
	\arrow["{\operatorname{id}\times p}"', from=2-1, to=2-2]
	\arrow["{h\times \operatorname{id}}"', from=2-2, to=2-3]
	\arrow["p", from=1-3, to=2-3]
	\arrow[from=1-1, to=1-3]
	\arrow["H"{description}, from=2-1, to=1-3]
\end{tikzcd}\]is commutative, where $h:\Delta^{1}\times K\to K$ is the map determined
by the inclusion $K\times\{0\}\hookrightarrow K^{\rcone}$ and the
projection $K\times\{1\}\to\{\infty\}$ and $\overline{\cal D}=\pr{\cal D,M_{\cal D}}$.
\item The restriction $H\vert\{1\}^{\sharp}\times X_{\natural}$ is equal
to $\opn{Rf}$.
\end{enumerate}
Note that refraction maps exist and are unique up to homotopy in the
model category $\SS_{/\frak P}^{+}$.
\end{defn}

Here is the colimit criterion.
\begin{prop}
\cite[Proposition 6.8]{A24c}\label{prop:A24c.6.8} Let $\frak P$
be a commutative categorical pattern on an $\infty$-category $\cal D$,
let $K$ be a small simplicial set, let $f:K^{\rcone}\to\frak P\-\Fib$
be a diagram which classifies a $\frak P$-bundle $X'\to K^{\rcone}\times\cal D$.
Set $X=X'\times_{K^{\rcone}}K$. The following conditions are equivalent:
\begin{enumerate}
\item The diagram $f$ is a colimit diagram.
\item The refraction map $X_{\natural}\to X'_{\natural}\times_{\pr{K^{\rcone}}^{\sharp}}\{\infty\}$
is a $\frak P$-equivalence.
\end{enumerate}
\end{prop}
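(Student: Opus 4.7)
The plan is to characterize the colimit condition on $\overline{f}$ via the mapping-space criterion and match it against the $\frak P$-equivalence condition on the refraction map. Write $\cal P^{\natural} = X'_{\natural} \times_{(K^{\rcone})^{\sharp}} \{\infty\}^{\sharp}$ for the fiber of $X'_{\natural}$ at $\infty$, so that $\overline{f}(\infty)$ is the $\frak P$-fibered object underlying $\cal P^{\natural}$.

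First, $\overline{f}$ is a colimit diagram in $\frak P\-\Fib$ if and only if for every $\cal O \in \frak P\-\Fib$, the restriction map
\[
\Map_{\frak P\-\Fib}(\overline{f}(\infty), \cal O) \;\longrightarrow\; \lim_{v \in K} \Map_{\frak P\-\Fib}(\overline{f}(v), \cal O)
\]
is a weak equivalence of spaces. The left-hand side is $\Map_{\SS^{+}_{/\frak P}}(\cal P^{\natural}, \cal O^{\natural})$ by the definition of $\frak P\-\Fib$. For the right-hand side, I would apply Theorem \ref{thm:A24cmain} to rewrite functors $K \to \frak P\-\Fib$ as $\frak P$-bundles over $K$: the restricted diagram $\overline{f}\vert K$ corresponds to the bundle $X \to K \times \cal D$, and the constant diagram at $\cal O$ corresponds to the constant bundle $K \times \cal O$. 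The limit of mapping spaces then becomes
\[
\lim_{v \in K} \Map_{\frak P\-\Fib}(\overline{f}(v), \cal O) \;\simeq\; \Map_{\frak P\-\Bund\pr K}(X, K \times \cal O) \;\simeq\; \Map_{\SS^{+}_{/\frak P}}(X_{\natural}, \cal O^{\natural}),
\]
the last step being because a map of bundles over $K$ into a constant bundle at $\cal O$ is the same datum as a map of the total space $X_{\natural}$ into $\cal O^{\natural}$ in $\SS^{+}_{/\frak P}$.

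Under these identifications, the restriction map in the colimit criterion becomes the precomposition map
\[
\Map_{\SS^{+}_{/\frak P}}(\cal P^{\natural}, \cal O^{\natural}) \to \Map_{\SS^{+}_{/\frak P}}(X_{\natural}, \cal O^{\natural})
\]
induced by the refraction map $X_{\natural} \to \cal P^{\natural}$. By the definition of weak equivalences in the simplicial model category $\SS^{+}_{/\frak P}$, this map is a weak equivalence for every fibrant $\cal O^{\natural}$ if and only if the refraction map is a $\frak P$-equivalence. This establishes (1)~$\Leftrightarrow$~(2).

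The main obstacle is the second equivalence in the display: identifying $\lim_{v \in K} \Map_{\frak P\-\Fib}(\overline{f}(v), \cal O)$ with $\Map_{\frak P\-\Bund\pr K}(X, K \times \cal O)$. This requires a relative form of Theorem \ref{thm:A24cmain} that respects simplicial enrichments, so that mapping spaces in $\Fun(K, \frak P\-\Fib)$ translate correctly under straightening-unstraightening for bundles. Concretely, I would exhibit an explicit simplicial enrichment on $\frak P\-\Bund\pr K$ matching the one induced on $\SS^{+}_{/K^{\sharp} \times \frak P}$, and verify that the equivalence of Theorem \ref{thm:A24cmain} upgrades to an equivalence of simplicially enriched categories. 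Once this compatibility is established, the remaining computations are a routine unwinding.
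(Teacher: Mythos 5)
Note first that this paper does not prove the proposition at all: it is imported from \cite[Proposition 6.8]{A24c}, and the closest argument actually carried out here is the universal variant, Proposition \ref{prop:univ_colim_criterion}. So your proposal has to be judged on its own merits. Its skeleton is the natural one, and several of the identifications you make are indeed fine: $\overline{f}$ is a colimit diagram iff $\Map_{\frak P\-\Fib}\pr{\overline{f}\pr{\infty},\cal O}\to\lim_{K^{\op}}\Map_{\frak P\-\Fib}\pr{\overline{f}\pr v,\cal O}$ is an equivalence for all $\cal O$ (note the index should be $K^{\op}$, not $K$); the limit computes $\Map_{\Fun\pr{K,\frak P\-\Fib}}\pr{\overline{f}\vert K,\Delta_{\cal O}}$; Theorem \ref{thm:A24cmain} converts this to a mapping space in $\frak P\-\Bund\pr K$; and $\Map_{\frak P\-\Bund\pr K}\pr{X,K\times\cal O}\simeq\Map_{\SS_{/\frak P}^{+}}\pr{X_{\natural},\cal O^{\natural}}$ because $K^{\sharp}\times\cal O^{\natural}$ is fibrant ($K^{\sharp}\times-$ is right Quillen) and all objects are cofibrant. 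Likewise the last step, detecting weak equivalences by mapping into fibrant objects, is correct.

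The genuine gap is the step you defer as "routine unwinding": identifying the canonical restriction map $\Map\pr{\overline{f}\pr{\infty},\cal O}\to\lim_{K^{\op}}\Map\pr{\overline{f}\pr v,\cal O}$, after all these identifications, with precomposition by the refraction map $\opn{Rf}:X_{\natural}\to X'_{\natural}\times_{\pr{K^{\rcone}}^{\sharp}}\{\infty\}^{\sharp}$. The refraction map is not an abstract comparison map; it is a concrete construction (Definition \ref{def:refraction}) characterized by a marked homotopy $H$ over the collapse map $h$, and nothing in your outline ties it to the cocartesian transport along the edges $v\to\infty$ of $K^{\rcone}$ that implements restriction-to-the-cone-point under unstraightening. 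Without this, you have only shown that $\overline{f}$ is a colimit diagram iff \emph{some} map $X_{\natural}\to X'_{\natural}\times_{\pr{K^{\rcone}}^{\sharp}}\{\infty\}^{\sharp}$ is a $\frak P$-equivalence, which is empty of content until that map is identified with $\opn{Rf}$; this identification is precisely what the cited proposition provides. Moreover, your proposed remedy, upgrading Theorem \ref{thm:A24cmain} to a simplicially enriched equivalence, does not obviously exist (the equivalence there is assembled from a zig-zag through the straightening Quillen equivalence, which is not simplicial) and would not by itself single out $\opn{Rf}$. What actually closes the gap, and is how the paper treats the universal analogue in Proposition \ref{prop:univ_colim_criterion}, is an explicit model: reduce to $K=N\pr{\cal C}$ via a final map from the nerve of an ordinary category, model $\overline{f}$ by a projectively fibrant functor $\overline{F}:\cal C^{\rcone}\to\SS_{/\frak P}^{+}$, realize the bundle as the relative nerve (Example \ref{exa:relnerve}), check by hand that the explicit map $\rho_{\overline{F}}$ is a refraction map, and conclude via the comparison of homotopy colimits with colimits \cite[Theorem 4.2.4.1]{HTT}. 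Some such computation is unavoidable, and it is the heart of the proof rather than a routine afterthought.
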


We conclude this section with a remark comparing bundles of $\infty$-operads
with Lurie's \textit{families of $\infty$-operads}.
\begin{rem}
\label{rem:family_vs_bundle}For the categorical pattern $\frak{Op}$
for $\infty$-operads (Example \ref{exa:oo-operad}), the notion of
$\frak{Op}$-bundles is closely related to that of families of $\infty$-operads
\cite[Definition 2.3.2.10]{HA}. More precisely, if $\cal C$ is an
$\infty$-category, then every $\frak{Op}$-bundle over $\cal C$
is a family of $\infty$-operads. This is immediate from the definitions
and \cite[Corollary 4.3.1.15]{HTT}.

We can also prove this by using model categories: To see this, it
will be convenient to introduce some notation. Given a marked simplicial
set $\overline{S}=\pr{S,M_{S}}$ and a commutative categorical pattern
$\frak P=\pr{M_{\cal D},\{p_{\alpha}\}_{\alpha\in A}}$ on an $\infty$-category
$\cal D$, let $\overline{S}\times\frak P$ denote the pair $\pr{M_{S}\times M_{\cal D},\{\{v\}\times p_{\alpha}:\{v\}\times K_{\alpha}^{\lcone}\to S\times\cal D\}_{v\in S_{0},\,\alpha\in A}}$.
Unwinding the definitions, the fibrant objects of $\SS_{/\cal C^{\natural}\times\frak{Op}}^{+}$
are precisely the $\cal C$-families of $\infty$-operads whose inert
morphisms are marked.Since the pullback functor $\SS_{/\cal C\times\frak{Op}}^{+}=\SS_{/\cal C^{\sharp}\times\frak{Op}}^{+}\to\SS_{/\cal C^{\natural}\times\frak{Op}}^{+}$
is right Quillen (\cite[Proposition B.2.9]{HA}, every $\frak{Op}$-bundle
over $\cal C$ is a $\cal C$-family of $\infty$-operads.

Note that the above argument shows that if $\cal C$ is a Kan complex
(so that $\cal C^{\natural}=\cal C^{\sharp}$), then $\cal C$-families
of $\infty$-operads and bundles of $\infty$-operads over $\cal C$
are exactly the same things.
\end{rem}

\subsection{\label{subsec:univ_weq}Universal Weak Equivalences}

Recall that our goal of this section is to show that a certain diagram
in $\Op_{\infty}$ is a universal colimit diagram. For this, we will
need a version of Proposition \ref{prop:A24c.6.8} for universal colimit
diagrams (Proposition \ref{prop:univ_colim_criterion}), which is
the subject of this subsection.

To state the main result of this subsection, we need a model-categorical
notion associated with universal colimit diagrams, called universal
weak equivalences.
\begin{defn}
Let $\bf A$ be a model category. A morphism $A\to B$ of $\bf A$
is called a \textbf{universal weak equivalence }if for each fibration
$X\to B$ in $\bf A$, the map $A\times_{B}X\to X$ is a weak equivalence. 

If $\frak P$ is a categorical pattern, we will refer to universal
weak equivalences of $\SS_{/\frak P}^{+}$ as \textbf{universal $\frak P$-equivalences}.
\end{defn}

\begin{example}
\label{exa:univ_weq}Let $\bf A$ be a model category.
\begin{enumerate}
\item Every weak equivalence of fibrant objects of $\bf A$ is a universal
weak equivalence. This follows from \cite[Lemma A.2.4.3]{HTT}.
\item Let $A\xrightarrow{f}B\xrightarrow{g}C$ be morphisms of $\bf A$.
Suppose that $f$ is a universal weak equivalence. Then $g$ is a
universal weak equivalence if and only if $gf$ is a universal weak
equivalence.
\item Suppose that $\bf A$ is a simplicial model category in which every
object is cofibrant. We say that a morphism $i:A\to B$ is a \textbf{right
deformation retract} if there is a retraction $r:B\to A$ of $i$
and a map $h:\Delta^{1}\otimes B\to B$ such that $h\vert\{0\}\otimes B=\id_{B}$
and $h\vert\{1\}\otimes B=ir$. Every right deformation retract of
$\bf A$ is a universal weak equivalence. This follows from \cite[Proposition 6.15]{A24c}.
\item Suppose that $\bf A$ is a simplicial model category in which every
object is cofibrant. Part (3) (and its dual) implies that, for each
$A\in\bf A$ and $i\in\{0,1\}$, the map $\{i\}\otimes A\to\Delta^{1}\otimes A$
is a universal weak equivalence. Hence, by part (2), universal weak
equivalences of $\bf A$ are stable under left homotopy.
\end{enumerate}
\end{example}

Here is the main result of this subsection.
\begin{prop}
\label{prop:univ_colim_criterion}Let $\frak P$ be a commutative
categorical pattern on an $\infty$-category $\cal D$, let $K$ be
a small simplicial set, and let $f:K^{\rcone}\to\frak P\-\Fib$ be
a diagram which classifies a $\frak P$-bundle $X'\to K^{\rcone}\times\cal D$.
Set $X_{\natural}=X'_{\natural}\times_{\pr{K^{\rcone}}^{\sharp}}K^{\sharp}$
and $\pr{X'_{\infty}}_{\natural}=X'_{\natural}\times_{\pr{K^{\rcone}}^{\sharp}}\{\infty\}^{\sharp}$.
The following conditions are equivalent:
\begin{enumerate}
\item The diagram $f$ is a universal colimit diagram.
\item The refraction map $X_{\natural}\to\pr{X'_{\infty}}_{\natural}$ is
a universal $\frak P$-equivalence of $\SS_{/\frak P}^{+}$.
\end{enumerate}
\end{prop}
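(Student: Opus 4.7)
The plan is to reduce to Proposition~\ref{prop:A24c.6.8} by analyzing cartesian natural transformations with target $\overline{f}$ and tracking how the refraction map transforms under pullback.

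First, by unwinding the definition, $\overline{f}$ is a universal colimit diagram if and only if, for every morphism $\phi : Y_\infty \to X'_\infty$ in $\frak P\-\Fib$, the diagram $\overline{f}_\phi : K^\rcone \to \frak P\-\Fib$ defined pointwise by $\overline{f}_\phi(v) = \overline{f}(v) \times_{X'_\infty} Y_\infty$ is a colimit diagram. Indeed, such cartesian natural transformations $\alpha : K^\rcone \times \Delta^1 \to \frak P\-\Fib$ with $\alpha|_{K^\rcone \times \{1\}} = \overline{f}$ are classified up to equivalence by their value on $\{\infty\} \times \Delta^1$. Applying a functorial fibrant replacement in $\SS_{/\frak P}^+$ to $\phi$, we may assume $\phi$ is a fibration between fibrant objects.

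Next, I would construct a $\frak P$-bundle $Z \to K^\rcone \times \cal D$ classifying $\overline{f}_\phi$ whose refraction map is $\frak P$-equivalent to the pullback $X_\natural \times_{X'_\infty} Y_\infty \to Y_\infty$ of $\opn{Rf}$ along $\phi$. The idea is to use the refraction homotopy $H$ of Definition~\ref{def:refraction} to extend $\phi$ fiberwise along the cocartesian lifts of the cone edges $v \to \infty$, producing a compatible family of pulled-back fibrations that assemble into $Z$. That $Z$ classifies $\overline{f}_\phi$ then follows from the pullback-compatibility of the classifying equivalence recorded in Remark~\ref{rem:A24cmain}, while the refraction of $Z$ coincides up to $\frak P$-equivalence with the pullback of $\opn{Rf}$ along $\phi$ by construction.

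Applying Proposition~\ref{prop:A24c.6.8} to the bundle $Z$ then shows that $\overline{f}_\phi$ is a colimit diagram if and only if $X_\natural \times_{X'_\infty} Y_\infty \to Y_\infty$ is a $\frak P$-equivalence. Quantifying over all fibrations $\phi$ yields precisely: $\overline{f}$ is a universal colimit if and only if $\opn{Rf}$ is a universal $\frak P$-equivalence, completing the proof. The main technical hurdle is the second step: constructing $Z$ and identifying its refraction forces us to make rigorous sense of ``extending $\phi$ fiberwise'' using only the homotopy $H$ rather than a strict map $X' \to X'_\infty$. The combinatorics of the model structure on $\SS_{/K^\rcone \times \frak P}^+$ together with the fibrancy of $\phi$ should carry this through.
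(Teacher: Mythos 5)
Your overall strategy is the same as the paper's: reformulate universality by saying that the pointwise pullback of $\overline{f}$ along each map $\phi\colon Y_{\infty}\to\overline{f}\pr{\infty}$ is a colimit diagram (this classification of cartesian transformations out of a cone by their value at the cone point is the paper's Lemma \ref{lem:cartesian_transformation}, which you assert rather than prove, but it is routine), and then feed each pulled-back diagram into Proposition \ref{prop:A24c.6.8}. The genuine gap is that essentially all of the content of the proposition sits in the step you leave as a hope: producing a $\frak P$-bundle $Z$ classifying $\overline{f}_{\phi}$ \emph{together with} an identification of its refraction map with the pullback of $\opn{Rf}$ along $\phi$. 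There is no strict map $X'\to X'_{\natural}\times_{\pr{K^{\rcone}}^{\sharp}}\{\infty\}^{\sharp}$ to pull back along --- only the homotopy $H$, which is itself defined only up to non-canonical choices --- and ``extending $\phi$ fiberwise along cocartesian lifts of the cone edges'' does not, as stated, assemble into a strict object of $\SS_{/K^{\rcone}\times\frak P}^{+}$ with a computable refraction map. You flag this yourself as the main technical hurdle, but the appeal to ``the combinatorics of the model structure'' is not an argument; without it, the final quantification over $\phi$ has nothing to which Proposition \ref{prop:A24c.6.8} can be applied.

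The paper closes exactly this gap by strictifying. It first reduces to $K=N\pr{\cal C}$ for an ordinary category $\cal C$ via a final map $N\pr{\cal C}\to K$ \cite[Proposition 4.2.3.14]{HTT} (a reduction your sketch skips, and which itself needs Lemma \ref{lem:cartesian_transformation} plus an argument, via \cite[Lemma 6.16]{A24c}, that precomposition with the final map does not change the universal-equivalence condition on the refraction map). It then replaces $\overline{f}$ by the nerve of a projectively fibrant functor $\overline{F}\colon\cal C^{\rcone}\to\SS_{/\frak P}^{+}$, so that the classifying bundle is the relative nerve $\int_{\cal C^{\rcone}}\overline{F}_{\flat}$ and the refraction map $\rho_{\overline{F}}$ has an explicit simplex-wise formula; with this model, the pullback along a fibration $\cal E_{\natural}\to\overline{F}\pr{\infty}$ is again a strict functor $\overline{G}=\overline{F}\pr -\times_{\overline{F}\pr{\infty}}\cal E_{\natural}$, and the square comparing $\rho_{\overline{G}}$ with $\rho_{\overline{F}}$ is literally cartesian, which is what makes the passage through Proposition \ref{prop:A24c.6.8} and \cite[Theorem 4.2.4.1]{HTT} legitimate. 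Transferring between the given bundle and this strict model uses the invariance statement of Lemma \ref{lem:ref_lem}, which your argument also needs (your $Z$ would at best be equivalent, not equal, to the given data) but does not mention. If you wish to avoid strictification, you must supply a concrete substitute construction of $Z$ and of the comparison of refraction maps; as written, the proposal restates the proposition at its crucial point rather than proving it.
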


\begin{rem}
In the situation of Proposition \ref{prop:univ_colim_criterion},
if some refraction map is a universal $\frak P$-equivalence, so is
any other map (by point (4) of Example \ref{exa:univ_weq}). This
justifies the usage of the definite article (``the'') in condition
(2).
\end{rem}

The proof of Proposition \ref{prop:univ_colim_criterion} relies on
a few preliminaries.
\begin{defn}
Let $K$ be a weakly contractible simplicial set, and let $\cal C$
be an $\infty$-category. We say that a diagram $f:K\to\cal C$ is\textbf{
essentially constant }if $f$ carries each morphism to an equivalence
in $\cal C$.
\end{defn}

\begin{rem}
\label{rem:ess_cst}Let $K$ be a weakly contractible simplicial set,
and let $\cal C$ be an $\infty$-category. Then:
\begin{enumerate}
\item The diagonal functor $\delta:\cal C\to\Fun\pr{K,\cal C}$ is fully
faithful.
\item The essential image of $\delta$ consists of the essentially constant
diagrams.
\end{enumerate}
For part (1), we may assume that $\cal C$ has colimits of shape $K$
(by embedding $\cal C$ into a larger $\infty$-category if necessary).
In this case, $\delta$ is a fully faithful right adjoint by \cite[Corollary 4.4.4.10]{HTT}.
Part (2) follows from the observation that every diagram $K\to\cal C^{\simeq}$
is equivalent to a constant diagram, because $K$ is weakly contractible.
\end{rem}

\begin{lem}
\label{lem:univ_colim}Let $\cal C$ be an $\infty$-category with
pullbacks, let $K$ be a simplicial set, and let $p:K^{\rcone}\to\cal C$
be a diagram. The following conditions are equivalent:
\begin{enumerate}
\item The diagram $p$ is a universal colimit diagram.
\item For every pullback diagram 
\[\begin{tikzcd}
	{p'} & p \\
	{q'} & q
	\arrow[from=1-1, to=1-2]
	\arrow[from=1-1, to=2-1]
	\arrow["\alpha", from=1-2, to=2-2]
	\arrow[from=2-1, to=2-2]
\end{tikzcd}\]in $\Fun\pr{K^{\rcone},\cal C}$, if $q$ and $q'$ are essentially
constant, then the diagram $p'$ is a colimit diagram.
\item There exists a morphism $\alpha:p\to q$ in $\Fun\pr{K^{\rcone},\cal C}$
satisfying the following conditions:
\begin{enumerate}
\item The map $\alpha_{\infty}:p\pr{\infty}\to q\pr{\infty}$ is an equivalence.
\item The diagram $q$ is essentially constant.
\item For every pullback diagram 
\[\begin{tikzcd}
	{p'} & p \\
	{q'} & q
	\arrow[from=1-1, to=1-2]
	\arrow[from=1-1, to=2-1]
	\arrow["\alpha", from=1-2, to=2-2]
	\arrow[from=2-1, to=2-2]
\end{tikzcd}\]in $\Fun\pr{K^{\rcone},\cal C}$, if $q'$ is essentially constant,
then $p'$ is a colimit diagram.
\end{enumerate}
\end{enumerate}
\end{lem}

\begin{proof}
We first prove that (1)$\implies$(2). Suppose that condition (1)
is satisfied. In the situation of (2), iterated applications of the
pasting law of pullbacks (\cite[Lemma 4.4.2.1]{HTT}) shows that the
natural transformation $p'\to p$ is cartesian. Hence $p'$ is a colimit
diagram, proving (1)$\implies$(2).

Next, we prove (2)$\implies$(1). Suppose that condition (2) is satisfied.
Let $\alpha:p'\to p$ be a cartesian natural transformation of diagrams
$K^{\rcone}\to\cal C$. We wish to show that $p'$ is a colimit diagram.
Pulling back $\alpha$ along the natural transformation $K^{\rcone}\times\Delta^{1}\to K^{\rcone}$
from the identity map to the constant map at the cone point, we obtain
a cartesian square 
\[\begin{tikzcd}
	{p'} & p \\
	{\delta(p'(\infty))} & {\delta(p(\infty))}
	\arrow[from=1-1, to=1-2]
	\arrow[from=1-1, to=2-1]
	\arrow[from=1-2, to=2-2]
	\arrow[from=2-1, to=2-2]
\end{tikzcd}\]in $\Fun\pr{K^{\rcone},\cal C}$, where $\delta:\cal C\to\Fun\pr{K^{\rcone},\cal C}$
denotes the diagonal functor. Condition (2) then tells us that $p'$
is a colimit diagram. Hence (2)$\implies$(1).

Finally, we prove (2)$\iff$(3). It is clear that (2)$\implies$(3).
For the converse, it suffices to show that for every morphism $\beta:p\to r$
in $\Fun\pr{K^{\rcone},\cal C}$ such that $r$ is essentially constant,
there is a diagram $\Delta^{2}\to\Fun\pr{K^{\rcone},\cal C}$ whose
boundary is depicted as 
\[\begin{tikzcd}
	& p \\
	q && r.
	\arrow["\alpha"', from=1-2, to=2-1]
	\arrow["{\beta }", from=1-2, to=2-3]
	\arrow["\gamma"', from=2-1, to=2-3]
\end{tikzcd}\]According to \cite[Proposition 4.3.2.17]{HTT}, for any pair of diagrams
$f,g:K^{\rcone}\to\cal C$ such that $g$ is essentially constant,
the map 
\begin{align*}
\Hom_{\Fun\pr{K^{\rcone},\cal C}}\pr{f,g} & \to\Hom_{\cal C}\pr{f\pr{\infty},g\pr{\infty}}
\end{align*}
is a homotopy equivalence. Therefore, it suffices to show that there
is a diagram $\Delta^{2}\to\cal C$ of the form
\[\begin{tikzcd}
	& {p(\infty)} \\
	{q(\infty)} && {r(\infty),}
	\arrow["{\alpha_\infty}"', from=1-2, to=2-1]
	\arrow["{\beta_\infty}", from=1-2, to=2-3]
	\arrow[from=2-1, to=2-3]
\end{tikzcd}\]which is clear because $\alpha_{\infty}$ is an equivalence.
\end{proof}
\begin{lem}
\label{lem:refraction}Let $\frak P$ be a commutative categorical
pattern on an $\infty$-category $\cal D$, let $K$ be a small simplicial
set, and let $X'\to K^{\rcone}\times\cal D$ be a $\frak P$-bundle
over $K^{\rcone}$. There is a map $r:X'_{\natural}\to\pr{X'_{\infty}}_{\natural}$
rendering the diagram 
\begin{equation}\label{d:refraction}
\begin{tikzcd}
	{(X'_\infty)_\natural} & {(X'_{\infty})_\natural} \\
	{X'_\natural} & {\overline{\mathcal{D}}}
	\arrow["{\operatorname{id}}", from=1-1, to=1-2]
	\arrow[hook, from=1-1, to=2-1]
	\arrow[from=1-2, to=2-2]
	\arrow["r"{description}, dashed, from=2-1, to=1-2]
	\arrow[from=2-1, to=2-2]
\end{tikzcd}
\end{equation}commutative. Moreover, such an $r$ is unique up to homotopy, and
the restriction $r\vert X_{\natural}$ is a refraction map for $p$.
\end{lem}

\begin{proof}
For the existence of the map $r$, it suffices to show that the inclusion
$\pr{X'_{\infty}}_{\natural}\subset X'_{\natural}$ induces a trivial
fibration
\[
\theta:\Map_{\SS_{/K^{\rcone}\times\frak P}^{+}}\pr{X'_{\natural},\pr{K^{\rcone}}^{\sharp}\times\pr{X'_{\infty}}_{\natural}}\xrightarrow{\simeq}\Map_{\SS_{/\frak P}^{+}}\pr{\pr{X'_{\infty}}_{\natural},\pr{X'_{\infty}}_{\natural}}
\]
of Kan complexes. The map $\theta$ is a Kan fibration (because $\SS_{/K^{\rcone}\times\frak P}^{+}$
is a simplicial model category and the inclusion $\pr{X'_{\infty}}_{\natural}\subset X'_{\natural}$
is its cofibration), so it suffices to show that $\theta$ is a homotopy
equivalence. Using Theorem \ref{thm:A24cmain}, we are reduced to
showing that for every pair of diagrams $f,g:K^{\rcone}\to\frak P\-\Fib$
with $g$ essentially constant, the map
\[
\Hom_{\Fun\pr{K^{\rcone},\frak P\-\Fib}}\pr{f,g}\to\Hom_{\frak P\-\Fib}\pr{f\pr{\infty},g\pr{\infty}}
\]
is a homotopy equivalence, which is the content of \cite[Proposition 4.3.2.17]{HTT}.

To complete the proof, we must show that there is some filler of diagram
(\ref{d:refraction}) that restricts to a refraction map of $p$.
For this, let $h':\Delta^{1}\times K^{\rcone}\to K^{\rcone}$ denote
the natural transformation from the identity map to the constant map
at the base point. Since $\SS_{/K^{\rcone}\times\frak P}^{+}$ is
a simplicial model category, the left vertical arrow of the diagram
\[\begin{tikzcd}[column sep = small]
	{(\{0\}^\sharp\times X'_\natural)\amalg _{\{0\}^\sharp \times (X'_\infty )_\natural}((\Delta ^1)^\sharp  \times (X'_\infty)_\natural)} && {X'_{\natural}} \\
	{(\Delta^1)^\sharp \times X'_\natural} & {(\Delta^1)^{\sharp}\times (K^\triangleright)^\sharp\times \overline{\mathcal{D}}} & {(K^\triangleright)^\sharp\times \overline{\mathcal{D}}}
	\arrow[from=1-1, to=1-3]
	\arrow[hook, from=1-1, to=2-1]
	\arrow["{p'}", from=1-3, to=2-3]
	\arrow["{H'}"{description}, dashed, from=2-1, to=1-3]
	\arrow["{\operatorname{id}\times p'}"', from=2-1, to=2-2]
	\arrow["{h'\times \operatorname{id}}"', from=2-2, to=2-3]
\end{tikzcd}\]is a trivial cofibration. (Here the top arrow is the amalgamation
of the identity map of $X'_{\natural}$ and the projection $\pr{\Delta^{1}}^{\sharp}\times\pr{X'_{\infty}}_{\natural}\to\pr{X'_{\infty}}_{\natural}$.)
It follows that there is a dashed filler $H'$ as indicated in the
diagram. The restriction $r=H'\vert\{1\}^{\sharp}\times\pr{X'_{\infty}}_{\natural}$
is a filler of (\ref{d:refraction}) which restricts to a refraction
map, and the proof is complete.
\end{proof}
We now arrive at the proof of Proposition \ref{prop:univ_colim_criterion}:
\begin{proof}
[Proof of Proposition \ref{prop:univ_colim_criterion}]

Choose a retraction $r:X'_{\natural}\to\pr{X'_{\infty}}_{\natural}$
as in Lemma \ref{lem:refraction}. Given a fibration $\pi:Z_{\natural}\to\pr{X'_{\infty}}_{\natural}$
of $\SS_{/\frak P}^{+}$, form pullback squares as in the diagram
\[\begin{tikzcd}
	{Y_{\natural}} & {X_{\natural}} \\
	{Y'_{\natural}} & {X'_{\natural}} \\
	{Z_{\natural}} & {(X'_{\infty})_\natural.}
	\arrow[from=1-1, to=1-2]
	\arrow[from=1-1, to=2-1]
	\arrow[from=1-2, to=2-2]
	\arrow[from=2-1, to=2-2]
	\arrow["\zeta"', from=2-1, to=3-1]
	\arrow["r", from=2-2, to=3-2]
	\arrow["\pi"', from=3-1, to=3-2]
\end{tikzcd}\]The map $\pr{Y'_{\infty}}_{\natural}\to Z_{\natural}$ is an isomorphism,
and under this isomorphism, we can identify $\zeta\vert Y_{\natural}$
with the refraction map of $Y'_{\natural}$ (by Lemma \ref{lem:refraction}).
Therefore, by Proposition \ref{prop:A24c.6.8}, we can rephrase condition
(2) as follows:
\begin{itemize}
\item [($2'$)]For every fibration $Z_{\natural}\to\pr{X'_{\infty}}_{\natural}$
in $\SS_{/\frak P}^{+}$, the $\frak P$-bundle $Z_{\natural}\times_{X'_{\infty}}X'_{\natural}$
over $K^{\rcone}$ is classified by a colimit diagram $K^{\rcone}\to\frak P\-\Fib$.
\end{itemize}
Now since $K^{\rcone}$ is weakly contractible, the diagonal functor
$\frak P\-\Fib\to\Fun\pr{K^{\rcone},\frak P\-\Fib}$ is fully faithful,
with essential image consisting of essentially constant functors (Remark
\ref{rem:ess_cst}). It follows from Theorem \ref{thm:A24cmain} that
the functor $K^{\rcone}\times-:\frak P\-\Fib\to\frak P\-\Bund\pr{K^{\rcone}}$
is also fully faithful. Combining this observation with \cite[Proposition 4.2.4.1]{HTT},
we can further rephrase (2$'$) as follows:
\begin{itemize}
\item [($2''$)]For every pullback diagram 
\[\begin{tikzcd}
	A & {X'_{\natural}} \\
	B & {(K^{\triangleright })^\sharp\times (X'_{\infty})_\natural}
	\arrow[from=1-1, to=1-2]
	\arrow[from=1-1, to=2-1]
	\arrow[from=1-2, to=2-2]
	\arrow[from=2-1, to=2-2]
\end{tikzcd}\]in $\frak P\-\Bund\pr{K^{\rcone}}$ (where the right hand map is induced
by $r$), if $B$ is classified by an essentially constant diagram
$K^{\rcone}\to\frak P\-\Fib$, then $A$ is a colimit diagram.
\end{itemize}
Lemma \ref{lem:univ_colim} and Theorem \ref{thm:A24cmain} now show
that (2$''$) is equivalent to (1), and the proof is complete.
\end{proof}

\subsection{\label{subsec:univ_weq_op}Universal Weak Equivalence of \texorpdfstring{$\protect\SS_{/\protect\frak{Op}}^{+}$}{Preoperads}}

Let $K$ be a Kan complex. As we saw in Remark \ref{rem:family_vs_bundle},
if $\cal E\to K\times N\pr{\Fin_{\ast}}$ is a $K$-bundle of $\infty$-operads,
then it is a $K$-family of $\infty$-operads (in particular, a generalized
$\infty$-operad \cite[Proposition 2.3.2.11]{HA}) and the marked
edges of $\cal E_{\natural}$ are precisely the inert morphisms. This,
together with the universal colimit criterion we established in Subsection
\ref{subsec:univ_weq} (Proposition \ref{prop:univ_colim_criterion})
motivates the following question: Let $\cal A^{\t}\to\cal B^{\t}$
be a morphism of generalized $\infty$-operads, where $\cal B^{\t}$
is an $\infty$-operad. Let $\cal A^{\t,\natural},\cal B^{\t,\natural}$
denote the marked simplicial sets obtained from $\cal A^{\t}$ and
$\cal B^{\t}$ by marking the inert maps. When is the map $\cal A^{\t,\natural}\to\cal B^{\t,\natural}$
a universal weak equivalence of $\SS_{/\frak{Op}}^{+}$?

A more general question has been posed by Lurie in \cite[$\S$2.3.3]{HA}
for (non-universal) weak equivalences of $\SS_{/\frak{Op}}^{+}$.
While this was not explicitly stated by him, we will see that his
answer in fact accommodates universal weak equivalences of $\SS_{/\frak{Op}}^{+}$
(Proposition \ref{prop:universal_assembly}). 

We start by recalling the following theorem, which is a special case
of \cite[Theorem 2.3.3.23]{HA}.
\begin{thm}
\label{thm:2.3.3.23}Let $f:\cal A^{\t}\to\cal B^{\t}$ be a morphism
of generalized $\infty$-operads, where $\cal B^{\t}$ is an $\infty$-operad.
Suppose that $f$ satisfies the following conditions:
\begin{enumerate}
\item The functor $\cal A\to\cal B$ is a categorical equivalence.
\item For each object $B\in\cal B^{\t}$, the inclusion $\pr{\cal A_{B/}^{\t}}'\subset\cal A_{B/}^{\t}$
is initial, where $\pr{\cal A_{B/}^{\t}}^{\p}\subset\cal A_{B/}^{\t}$
denotes the full subcategory spanned by the objects $\pr{A,\alpha:B\to f\pr A}$
such that $\alpha$ is inert.
\end{enumerate}
Then the map $\cal A^{\t,\natural}\to\cal B^{\t,\natural}$ is a weak
equivalence of $\SS_{/\frak{Op}}^{+}$.
\end{thm}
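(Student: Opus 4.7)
The plan is to reduce to the mapping-space criterion for weak equivalences in $\SS_{/\frak{Op}}^{+}$: the map $\cal A^{\t,\natural}\to\cal B^{\t,\natural}$ is a weak equivalence if and only if, for every $\infty$-operad $\cal C^{\t}$, restriction along $f$ induces a categorical equivalence of $\infty$-categories of algebras
\[
f^{*}:\Alg_{\cal B}\pr{\cal C}\to\Alg_{\cal A}\pr{\cal C},
\]
where $\Alg_{\cal X}\pr{\cal C}$ denotes the $\infty$-category underlying $\Map_{\SS_{/\frak{Op}}^{+}}^{\flat}\pr{\cal X^{\t,\natural},\cal C^{\t,\natural}}$. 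I would construct a homotopy inverse to $f^{*}$ by extending each algebra $G:\cal A^{\t}\to\cal C^{\t}$ along $f$ to an algebra $\widetilde{G}:\cal B^{\t}\to\cal C^{\t}$ using an operadic right Kan extension in the style of \cite[\S 3.1.2--3.1.3]{HA}. For each $B\in\cal B^{\t}$, the value $\widetilde{G}\pr B$ should be the operadic limit of the composite $\pr{\cal A_{B/}^{\t}}^{\p}\to\cal A^{\t}\xrightarrow{G}\cal C^{\t}$, which parametrizes the inert projections of $B$ into the essential image of $f$.

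Condition (2) is exactly the pointwise existence hypothesis for this Kan extension: the initiality of $\pr{\cal A_{B/}^{\t}}^{\p}\hookrightarrow\cal A_{B/}^{\t}$ guarantees that the limit indexed by inert slices governs the whole undercategory, so no obstruction blocks the extension at any stage. Condition (1) then ensures that $\widetilde{G}\pr B$ is concretely computable: because $\cal C^{\t}$ is a genuine $\infty$-operad, every object is recovered from its arity-$1$ projections under inert decomposition, and the equivalence $\cal A\simeq\cal B$ lifts each such projection essentially uniquely through $f$, so the operadic limit assembles into a well-defined object of $\cal C^{\t}$. Parametrizing this construction over simplices $\Delta^{n}$—while verifying that the hypotheses of the theorem are preserved under the formation of $\Fun\pr{\Delta^{n},-}$—would upgrade $f^{*}$ to a trivial Kan fibration, completing the reduction.

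The main obstacle, and the step where care is required, is showing that the pointwise assignment $B\mapsto\widetilde{G}\pr B$ glues coherently into a morphism of marked simplicial sets over $N\pr{\Fin_{\ast}}$, rather than merely a functor on homotopy categories. Following the pattern of \cite[Theorem 2.3.3.23]{HA}, I expect this to reduce to checking a lifting property against the generating trivial cofibrations of $\SS_{/\frak{Op}}^{+}$, with the required fillers supplied by the universal property of the operadic Kan extension together with the initiality of $\pr{\cal A_{B/}^{\t}}^{\p}\subset\cal A_{B/}^{\t}$. The preservation of inert morphisms by $\widetilde{G}$ stems from the fact that the Kan extension is taken along inert-valued slices, while compatibility with the generalized-operadic structure over non-$\inp 1$ objects of $N\pr{\Fin_{\ast}}$ follows by applying condition (2) fiberwise.
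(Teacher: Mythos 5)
Your opening reduction—prove that $f^{*}:\Alg_{\cal B}\pr{\cal C}\to\Alg_{\cal A}\pr{\cal C}$ is a categorical equivalence for every $\infty$-operad $\cal C^{\t}$—is the same first step as the paper's, and you have correctly located where the hypotheses enter: condition (2) makes the inert slice $\pr{\cal A_{B/}^{\t}}^{\p}$ compute the relevant relative limit, and condition (1) together with the $\infty$-operad structure of $\cal C^{\t}$ reduces that limit to a finite $p$-product of arity-one values, which exists. The genuine gap is precisely the step you flag as the ``main obstacle'': you never supply the mechanism that turns the pointwise formula $B\mapsto\widetilde{G}\pr B$ into an actual functor inverse to $f^{*}$, and the devices you name do not do this job. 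The sections of \cite{HA} you cite develop operadic \emph{left} Kan extensions, not right ones, so there is no off-the-shelf ``operadic right Kan extension along $f$'' (note also that $f$ need not be fully faithful, so even the ordinary theory of Kan extensions along full inclusions does not apply directly); checking lifting against the generating trivial cofibrations of $\SS_{/\frak{Op}}^{+}$ would establish that some map is a fibration, not that $f^{*}$ is an equivalence; and $f^{*}$ cannot be expected to become a trivial Kan fibration, since $f$ is an arbitrary morphism of generalized $\infty$-operads (in particular not a cofibration), so $f^{*}$ need not even be a fibration, let alone surjective on vertices. If instead you intend $\widetilde{G}$ to be a right adjoint to $f^{*}$, you must additionally prove that the unit and counit are equivalences, which is again where conditions (1) and (2) must be invoked and which the sketch does not address.

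The paper (following the argument of \cite[Theorem 2.3.3.23]{HA}) resolves exactly this coherence problem without ever constructing $\widetilde{G}$ directly. One forms the correspondence $u:\cal M^{\t}=\cal B^{\t}\star_{\cal B^{\t}}\cal A^{\t}\to\Delta^{1}$ with its retraction $r:\cal M^{\t}\to\cal B^{\t}$, and the full subcategory $\cal X\subset\Fun_{N\pr{\Fin_{\ast}}}\pr{\cal M^{\t},\cal O^{\t}}$ of functors whose restriction to $\cal B^{\t}$ is a $\cal B$-algebra and which carry $u$-cartesian edges to equivalences. The restriction $\cal X\to\Alg_{\cal B}\pr{\cal O}$ is a trivial fibration, and the restriction $\cal X\to\Alg_{\cal A}\pr{\cal O}$ is shown to be an equivalence by the criterion of \cite[Proposition 4.3.2.15]{HTT} for $p$-right Kan extensions relative to $N\pr{\Fin_{\ast}}$: every $\cal A$-algebra extends, and membership in $\cal X$ is recognized by being such an extension. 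The pointwise existence and recognition clauses are verified by exactly the computation you outline (condition (2) to pass to $\pr{\cal A_{B/}^{\t}}^{\p}$, a further reduction to arity-one inert targets, condition (1) to identify the initial finite family of inert lifts, and $p$-product cones in $\cal O^{\t}$). So your local analysis would slot into that framework essentially verbatim, but as written the proposal is missing this correspondence-plus-relative-Kan-extension device (or an equivalent substitute, such as a full verification of an adjunction with invertible unit and counit), and that device is the actual technical content of the proof.
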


Condition (2) of Theorem \ref{thm:2.3.3.23} admits the following
reformulation, which follows from the argument of \cite[2.3.3.11]{HA}.
\begin{prop}
\label{prop:2.3.3.11}Let $f:\cal A^{\t}\to\cal B^{\t}$ be a map
of generalized $\infty$-operads. The following conditions are equivalent:
\begin{enumerate}
\item For each object $B\in\cal B^{\t}$, the inclusion $\pr{\cal A_{B/}^{\t}}'\subset\cal A_{B/}^{\t}$
is initial, where $\pr{\cal A_{B/}^{\t}}'$ is defined as in Theorem
\ref{thm:2.3.3.23}.
\item For each object $A\in\cal A^{\t}$, the homotopy fibers (in the Joyal
model structure) of the functor
\[
\pr{\cal A_{\act}^{\t}}_{/A}\to\pr{\cal B_{\act}^{\t}}_{/f\pr A}
\]
are weakly contractible.\footnote{Here $\pr{\cal A_{\act}^{\t}}_{/A}$ denotes the slice of $\cal A_{\act}^{\t}$
with respect to $A$, not the fiber product $\cal A_{\act}^{\t}\times_{\cal A^{\t}}\cal A_{/A}^{\t}$.}
\end{enumerate}
\end{prop}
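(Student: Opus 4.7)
The plan is to adapt the argument of \cite[Proposition 2.3.3.11]{HA}, first reformulating condition (1) via Joyal's cofinality criterion, and then identifying the resulting slices with homotopy fibers of the active-map functor by means of inert--active factorization.

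By the initial-functor version of \cite[Theorem 4.1.3.1]{HTT}, condition (1) is equivalent to the assertion that for every $X = \pr{A, \alpha\colon B \to f\pr A} \in \cal A_{B/}^{\t}$, the $\infty$-category
\[
\cal W_X = \pr{\cal A_{B/}^{\t}}' \times_{\cal A_{B/}^{\t}} \pr{\cal A_{B/}^{\t}}_{/X}
\]
is weakly contractible. Concretely, its objects are triples $\pr{A', \alpha'\colon B \to f\pr{A'},\, \phi\colon A' \to A}$ with $\alpha'$ inert and $f\pr \phi \alpha' \simeq \alpha$.

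Write $\alpha = \alpha^a \circ \alpha^i$ for the canonical inert--active factorization of $\alpha$ in $\cal B^{\t}$, with $\alpha^i\colon B \to C$ inert and $\alpha^a\colon C \to f\pr A$ active. The core of the plan is to identify $\cal W_X$ (up to weak equivalence) with the homotopy fiber $\cal F_{\alpha^a}$ of $\pr{\cal A_{\act}^{\t}}_{/A} \to \pr{\cal B_{\act}^{\t}}_{/f\pr A}$ over $\alpha^a$. I would proceed in two stages: (a) the full subcategory $\cal Z_X \subset \cal W_X$ spanned by those triples in which $\phi$ is active is a reflective subcategory, the reflection being inert--active factorization $\phi = \phi^a \circ \phi^i$ carried out in $\cal A^{\t}$ (the unit at $\pr{A', \alpha', \phi}$ being induced by $\phi^i\colon A' \to A''$, which also carries $\alpha'$ to the inert composite $f\pr{\phi^i}\alpha'$); (b) the forgetful functor $\cal Z_X \to \cal F_{\alpha^a}$ that replaces $\alpha'$ by the equivalence $\theta\colon f\pr{A'} \xrightarrow{\simeq} C$ produced by uniqueness of inert--active factorization applied to $\alpha = f\pr \phi \alpha'$ is a trivial fibration, since $\alpha'$ can be reconstructed from $\theta$ as $\theta^{-1}\alpha^i$.

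Granted this identification, the conclusion is formal: every active $\beta\colon C \to f\pr A$ arises as $\alpha^a$ for some $\pr{B, X}$ (take $B = C$ and $\alpha = \beta$), and every $\pr{B, X}$ produces such a $\beta$, so ``$\cal W_X$ is weakly contractible for all $\pr{B, X}$'' translates into ``$\cal F_\beta$ is weakly contractible for all $\pr{A, \beta}$''. The main technical obstacle will be stage (a): verifying the reflective-subcategory structure coherently requires showing that any morphism in $\cal W_X$ whose target lies in $\cal Z_X$ factors essentially uniquely through the inert leg $\phi^i$, which ultimately reduces to uniqueness of inert--active factorization in generalized $\infty$-operads---precisely the mechanism exploited in \cite[$\S$2.3.3]{HA}.
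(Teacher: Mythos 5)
Your plan is essentially the paper's own argument: Joyal's cofinality criterion turns condition (1) into weak contractibility of the comma categories $\cal W_X$; the full subcategory $\cal Z_X$ of objects with active $\cal A$-leg is reflective, the unit being supplied by inert--active factorization in $\cal A^{\t}$ (in the paper this is the assertion that the corresponding inclusion is a right adjoint, the required initial objects coming from the orthogonality of the inert--active factorization system); and $\cal Z_X$ is then identified with a homotopy fiber of $\pr{\cal A_{\act}^{\t}}_{/A}\to\pr{\cal B_{\act}^{\t}}_{/f\pr A}$, after which the translation between ``all $\pr{B,\alpha}$'' and ``all active $\beta$'' is the same bookkeeping you describe. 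The one step that needs repair is your stage (b): the ``forgetful functor'' $\cal Z_X\to\cal F_{\alpha^a}$ that ``replaces $\alpha'$ by the equivalence $\theta$'' is not a well-defined map of simplicial sets, since $\theta$ (and all the more so $\theta^{-1}$, used to ``reconstruct'' $\alpha'$) exists only up to contractible choice; an objectwise recipe of this kind does not assemble into a functor, let alone a trivial fibration, without further apparatus. The clean implementation --- and the one the paper uses --- avoids choosing $\theta$ altogether: identify $\cal Z_X$ with the strict fiber product $\pr{\cal A_{\act}^{\t}}_{/A}\times_{\pr{\cal B_{\act}^{\t}}_{/f\pr A}}\cal X$, where $\cal X\subset\cal B_{B//f\pr A}^{\t}$ is the full subcategory spanned by the inert--active factorizations of $\alpha$; since $\cal X$ is a contractible Kan complex containing $\alpha^a$ and the projection $\cal X\to\pr{\cal B_{\act}^{\t}}_{/f\pr A}$ is a categorical fibration (restriction of a left fibration to full subcategories closed under equivalences), this strict pullback computes the homotopy fiber over $\alpha^a$. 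With that substitution your argument is complete. Note also that the technical weight lies here rather than in stage (a), which you flag as the main obstacle but which is the routine part: the essentially unique factorization through the inert leg is exactly the lifting property of the inert--active factorization system on a generalized $\infty$-operad.
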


Combining Theorem \ref{thm:2.3.3.23} and Proposition \ref{prop:2.3.3.11},
we obtain the following criterion for universal weak equivalences
in $\SS_{/\frak{Op}}^{+}$.
\begin{prop}
\label{prop:universal_assembly}Let $f:\cal A^{\t}\to\cal B^{\t}$
be a morphism of generalized $\infty$-operads, where $\cal B^{\t}$
is an $\infty$-operad. Suppose that the functor $\cal A\to\cal B$
is a categorical equivalence and that for each object $A\in\cal A^{\t}$,
the homotopy fibers (in the Joyal model structure) of the functor
\[
\pr{\cal A_{\act}^{\t}}_{/A}\to\pr{\cal B_{\act}^{\t}}_{/f\pr A}
\]
are weakly contractible. Then the map $f:\cal A^{\t,\natural}\to\cal B^{\t,\natural}$
is a universal weak equivalence of $\SS_{/\frak{Op}}^{+}$.
\end{prop}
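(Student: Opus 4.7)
The plan is to apply Theorem \ref{thm:2.3.3.23} (via Proposition \ref{prop:2.3.3.11}) to the pullback of $f$ along an arbitrary fibration over $\cal B^{\t,\natural}$. Since $\cal B^\t$ is an $\infty$-operad, $\cal B^{\t,\natural}$ is fibrant in $\SS_{/\frak{Op}}^+$, so any fibration $\o Y\to\cal B^{\t,\natural}$ has $\o Y$ fibrant; hence we may write $\o Y=\cal C^{\t,\natural}$ for some $\infty$-operad $\cal C^\t$ equipped with a fibration $r:\cal C^\t\to\cal B^\t$ in $\SS_{/\frak{Op}}^+$, which is in particular a categorical fibration. Setting $\cal D^\t=\cal A^\t\times_{\cal B^\t}\cal C^\t$, the task reduces to showing that the projection $f':\cal D^\t\to\cal C^\t$ induces a weak equivalence in $\SS_{/\frak{Op}}^+$.

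We would first verify that $\cal D^\t$ is a generalized $\infty$-operad and that the marking on $\cal A^{\t,\natural}\times_{\cal B^{\t,\natural}}\cal C^{\t,\natural}$ agrees with the inert morphisms of $\cal D^\t$; both are direct checks from the definitions, using that $r$ preserves and reflects cocartesian lifts of inert morphisms. Granted this, we apply Theorem \ref{thm:2.3.3.23} to $f'$. Its first hypothesis, that $\cal D\to\cal C$ is a categorical equivalence, follows from right-properness of the Joyal model structure along the categorical fibration $\cal C\to\cal B$, since $\cal A\to\cal B$ is a categorical equivalence by assumption.

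For the second hypothesis, via Proposition \ref{prop:2.3.3.11}, we fix an object $\pr{A,C}\in\cal D^\t$ and observe the natural identification
\[
\pr{\cal D_\act^\t}_{/\pr{A,C}}\cong\pr{\cal A_\act^\t}_{/A}\times_{\pr{\cal B_\act^\t}_{/f\pr A}}\pr{\cal C_\act^\t}_{/C}.
\]
The map $\pr{\cal C_\act^\t}_{/C}\to\pr{\cal B_\act^\t}_{/f\pr A}$, being built from $r$, is a categorical fibration. Factoring the hypothesis map $h:\pr{\cal A_\act^\t}_{/A}\to\pr{\cal B_\act^\t}_{/f\pr A}$ as a trivial cofibration followed by a categorical fibration and applying right-properness along categorical fibrations shows that the displayed square is a homotopy pullback. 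Consequently, the homotopy fibers of the projection onto $\pr{\cal C_\act^\t}_{/C}$ agree with those of $h$, which are weakly contractible by hypothesis.

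The main obstacle is this last step: justifying that the displayed square is a homotopy pullback and matching homotopy fibers. This relies on right-properness of the Joyal model structure along categorical fibrations (the Joyal model structure is not fully right proper) and on verifying that the slice constructions appearing above inherit the categorical fibration property from $r$.
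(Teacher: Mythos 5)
Your proposal follows essentially the same route as the paper's proof: reduce to pulling back $f$ along a fibration of $\infty$-operads $\cal C^{\t}\to\cal B^{\t}$ (fibrations over the fibrant object $\cal B^{\t,\natural}$ automatically have fibrant source), apply Theorem \ref{thm:2.3.3.23} via Proposition \ref{prop:2.3.3.11} to the projection $\cal D^{\t}=\cal A^{\t}\times_{\cal B^{\t}}\cal C^{\t}\to\cal C^{\t}$, and use the identification $\pr{\cal D_{\act}^{\t}}_{/\pr{A,C}}\cong\pr{\cal A_{\act}^{\t}}_{/A}\times_{\pr{\cal B_{\act}^{\t}}_{/f\pr A}}\pr{\cal C_{\act}^{\t}}_{/C}$; in fact you are more explicit than the paper about the underlying categorical equivalence and about why the homotopy fibers transfer. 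The one thing to repair is your stated justification: ``right-properness of the Joyal model structure along categorical fibrations'' is not a valid principle. For instance, pulling back the inner anodyne $\Lambda_{1}^{2}\subset\Delta^{2}$ along the categorical fibration $\Delta^{\{0,2\}}\hookrightarrow\Delta^{2}$ yields $\partial\Delta^{1}\to\Delta^{1}$, so categorical equivalences are not stable under pullback along categorical fibrations in general. What you actually need is the standard lemma that a weak equivalence between \emph{fibrant} objects is preserved by pullback along a fibration (cf.\ \cite[Lemma A.2.4.3]{HTT}, already used in Example \ref{exa:univ_weq}). This applies in both places where you invoke properness: $\cal A\to\cal B$ is an equivalence of quasicategories being pulled back along the categorical fibration $\cal C\to\cal B$, and in the slice square all the simplicial sets involved, including the fibrant factorization of $h$, are quasicategories, so the strict pullback along the categorical fibration $\pr{\cal C_{\act}^{\t}}_{/C}\to\pr{\cal B_{\act}^{\t}}_{/f\pr A}$ is indeed a homotopy pullback. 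With that substitution your argument is complete and matches the paper's.
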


\begin{proof}
Let $\cal D^{\t}\to\cal B^{\t}$ be a fibration of $\infty$-operads
and set $\cal C^{\t}=\cal A^{\t}\times_{\cal B^{\t}}\cal D^{\t}$.
We must show that the map $g:\cal C^{\t,\natural}\to\cal D^{\t,\natural}$
is a weak equivalence of $\SS_{/\frak{Op}}^{+}$. By Theorem \ref{thm:2.3.3.23}
and Proposition \ref{prop:2.3.3.11}, it suffices to show that, for
each object $C\in\cal C^{\t}$, the functor
\[
\theta:\pr{\cal C_{\act}^{\t}}_{/C}\to\pr{\cal D_{\act}^{\t}}_{/g\pr C}
\]
has weakly contractible homotopy fibers. If $A\in\cal A^{\t}$ denotes
the image of $C$, then the map $\theta$ is a pullback of the functor
$\pr{\cal A_{\act}^{\t}}_{/A}\to\pr{\cal B_{\act}^{\t}}_{/f\pr A}$.
Since the latter has weakly contractible homotopy fibers, we are done.
\end{proof}

\subsection{\label{subsec:key}Proof of Lemma \ref{lem:key}}

In this subsection, we will prove Lemma \ref{lem:key}, the main result
of this section, by using results in Subsections \ref{subsec:univ_weq}
and \ref{subsec:univ_weq_op}.

We begin with a lemma.
\begin{lem}
\label{lem:K^=00005Camalg_approx}Let $K$ be a Kan complex, and let
$f:\cal O^{\t}\to K^{\amalg}$ be a morphism of generalized $\infty$-operads.
Suppose that $f$ satisfies the following conditions:
\begin{enumerate}
\item The $\infty$-category $\cal O$ is a Kan complex, and the map $\cal O\to K$
is a homotopy equivalence of Kan complexes. 
\item The functor $\cal O^{\t}\to N\pr{\Fin_{\ast}}$ is conservative.
\item For each $n\geq0$ and each object $X\in\cal O_{\inp n}^{\t}$, the
map $\pr{\pr{\cal O_{\act}^{\t}}_{/X}}^{\simeq}\to\pr{\pr{N\pr{\Fin_{\ast}}_{\act}}_{/\inp n}}^{\simeq}$
is a homotopy equivalence.
\end{enumerate}
Then $f$ is a universal weak equivalence of $\SS_{/\frak{Op}}^{+}$.
\end{lem}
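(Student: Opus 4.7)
The plan is to apply Proposition \ref{prop:universal_assembly} to the map $f\colon \cal O^\t \to K^\amalg$. Hypothesis (a) of that proposition --- that the underlying functor $\cal O \to K^\amalg_{\inp 1} = K$ is a categorical equivalence --- follows immediately from condition (1), since any homotopy equivalence of Kan complexes is automatically a categorical equivalence.

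The substantive content goes into verifying hypothesis (b): for each $A \in \cal O^\t$ lying over $\inp n$, the functor $\phi_A\colon \pr{\cal O^\t_\act}_{/A} \to \pr{K^\amalg_\act}_{/f\pr A}$ has weakly contractible homotopy fibers. I would invoke Proposition \ref{prop:2.3.3.11} to reformulate this as condition (2) of Theorem \ref{thm:2.3.3.23}: for each $B \in K^\amalg$, the inclusion of the full subcategory $\pr{\cal O^\t_{B/}}' \subset \cal O^\t_{B/}$ spanned by pairs $\pr{A',\alpha'\colon B \to f\pr{A'}}$ with $\alpha'$ inert should be initial.

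To verify this, I would construct, for each $\pr{A,\alpha} \in \cal O^\t_{B/}$, a canonical closest inert approximation. First factor $\alpha$ in the $\infty$-operad $K^\amalg$ via the inert-active factorization as $\alpha = \alpha_\act \circ \alpha_\inert$, with $\alpha_\inert\colon B \to C$ inert and $\alpha_\act\colon C \to f\pr A$ active. Condition (3), applied to $A$ together with the isomorphism class of $\alpha_\act$ in $\pr{\pr{N\pr{\Fin_\ast}_\act}_{/\inp n}}^\simeq$, produces --- up to contractible ambiguity --- an active morphism $\widetilde{\alpha}_\act\colon A' \to A$ in $\cal O^\t$ covering $\alpha_\act$, together with an identification $f\pr{A'} \simeq C$ in $K^\amalg$. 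Composing $\alpha_\inert$ with this identification gives an inert $\beta\colon B \to f\pr{A'}$, producing $\pr{A',\beta} \in \pr{\cal O^\t_{B/}}'$ together with a canonical morphism to $\pr{A,\alpha}$. I would then show that $\pr{A',\beta}$ is terminal in the comma $\pr{\cal O^\t_{B/}}' \times_{\cal O^\t_{B/}} \pr{\cal O^\t_{B/}}_{/\pr{A,\alpha}}$, so the comma is weakly contractible and the $\infty$-categorical Quillen Theorem A \cite[Theorem 4.1.3.1]{HTT} supplies the desired initiality.

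The hard part will be making the uniqueness assertion from condition (3) precise in the full $\infty$-category, not merely on maximal sub-Kan complexes. Since (3) is phrased in terms of $\pr -^\simeq$, extracting uniqueness of lifts inside the comma relies crucially on condition (2): conservativity implies that any two candidates in $\cal O^\t_{\inp m}$ covering the same $\alpha_\act$ and equipped with compatible active maps to $A$ are related by a unique equivalence. A useful auxiliary lemma is the analog of (3) for $K^\amalg$ itself: since $K$ is a Kan complex, a direct simplicial-category computation shows that $\pr{\pr{K^\amalg_\act}_{/Y}}^\simeq \to \pr{\pr{N\pr{\Fin_\ast}_\act}_{/\inp m}}^\simeq$ is a homotopy equivalence for every $Y \in K^\amalg_{\inp m}$, which combined with (3) makes $\phi_A$ an equivalence on cores and feeds into the terminality argument.
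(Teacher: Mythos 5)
Your opening reduction coincides with the paper's: apply Proposition \ref{prop:universal_assembly}, with the categorical equivalence $\cal O\to K$ supplied by condition (1). From there, however, you take a detour, and the detour is where the gap lies. Rather than verifying the homotopy-fiber hypothesis of Proposition \ref{prop:universal_assembly} directly, you pass back (via Proposition \ref{prop:2.3.3.11}) to the initiality condition of Theorem \ref{thm:2.3.3.23} and propose to exhibit the inert approximation $\pr{A',\beta}$ as a \emph{terminal} object of the comma $\pr{\cal O_{B/}^{\t}}'\times_{\cal O_{B/}^{\t}}\pr{\cal O_{B/}^{\t}}_{/\pr{A,\alpha}}$. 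That terminality claim is exactly where the content sits, and your sketch does not close it: terminality means that the space of morphisms $h:A''\to A'$ in $\cal O^{\t}$ compatible both with the inert legs under $B$ and with the active maps to $A$ is contractible, and none of (1)--(3) gives this directly. Conservativity (2) does not produce ``a unique equivalence'' between candidates; it only says that certain morphisms are invertible, not that the space of identifications (let alone of the coherence homotopies under $B$ and over $A$) is contractible. Similarly, (3) only controls lifts of active morphisms over $N\pr{\Fin_{\ast}}$, so even producing a lift of $\alpha_{\act}$ with $f\pr{A'}\simeq C$ compatibly already needs your auxiliary computation for $K^{\amalg}$; you note this, but the further passage from these core-level statements to contractibility of the comma mapping spaces (where one would also need the cocartesian property of inert morphisms to handle the under-$B$ leg) is precisely the ``hard part'' you flag and leave open.

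The shorter route--which is the paper's proof--is already contained in your last sentence. Once you know from (3) and your $K^{\amalg}$ computation (the map $\pr{\pr{K_{\act}^{\amalg}}_{/f\pr A}}^{\simeq}\to\pr{\pr{N\pr{\Fin_{\ast}}_{\act}}_{/\inp n}}^{\simeq}$ is a Kan fibration whose fibers are products of slices $K_{/v}$, hence contractible) that the active-slice functor $\pr{\cal O_{\act}^{\t}}_{/A}\to\pr{K_{\act}^{\amalg}}_{/f\pr A}$ is an equivalence on cores, and from (2) that it is conservative, you can conclude at once that its homotopy fibers in the Joyal model structure are weakly contractible: conservativity makes the fibers of an isofibrant replacement into Kan complexes, which then agree with the fibers of the induced map of cores. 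That is exactly the hypothesis of Proposition \ref{prop:universal_assembly}, so there is no need to return to the initiality condition, build terminal objects, or invoke Theorem A. Indeed, the natural way to \emph{prove} your terminality assertion is to establish this contractibility statement first, at which point the comma-category layer adds nothing; as written, your argument has the logical order reversed and the key contractibility unproved.
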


\begin{proof}
According to Proposition \ref{prop:universal_assembly}, it will suffice
to show that, for each object $X\in\cal O^{\t}$, the functor
\[
\pr{\cal O_{\act}^{\t}}_{/X}\to\pr{K_{\act}^{\amalg}}_{/f\pr X}
\]
has weakly contractible homotopy fibers. By condition (2), this functor
is conservative. Therefore, it suffices to show that the map
\[
\pr{\pr{\cal O_{\act}^{\t}}_{/X}}^{\simeq}\to\pr{\pr{K_{\act}^{\amalg}}_{/f\pr X}}^{\simeq}
\]
is a homotopy equivalence of Kan complexes. Condition (3) now shows
that this is equivalent to the condition that the map
\[
\pi:\pr{\pr{K_{\act}^{\amalg}}_{/f\pr X}}^{\simeq}\to\pr{\pr{N\pr{\Fin_{\ast}}_{\act}}_{/\inp n}}^{\simeq}
\]
be a homotopy equivalence, where $\inp n\in\Fin_{\ast}$ denotes the
image of $X$. Since $\pi$ is a Kan fibration, it suffices to show
that its fibers are contractible. But the fibers of $\pi$ are products
of simplicial sets of the form $K_{/v}$, where $v$ is some vertex
of $K$. In particular, the fibers of $\pi$ are contractible. The
proof is now complete.
\end{proof}
\begin{proof}
[Proof of Lemma \ref{lem:key}]Set $K'=\cal D_{/K}$ and let $\cal O_{\natural}^{\t}\to\pr{K^{\p\rcone}}^{\sharp}\times N\pr{\Fin_{\ast}}_{\natural}$
be an $\frak{Op}$-bundle classified by the composite
\[
K^{\p\rcone}=\pr{\cal D_{/K}}^{\rcone}\to\cal S\xrightarrow{\pr -^{\amalg}}\Op_{\infty}.
\]
Let $\chi:\cal O_{\natural}^{\t}\times_{\pr{K^{\p\rcone}}^{\sharp}}\pr{K^{\p}}^{\sharp}\to\cal O_{\natural}^{\t}\times_{\pr{K^{\p\rcone}}^{\sharp}}\{\infty\}^{\sharp}$
denote the refraction map. By Proposition \ref{prop:univ_colim_criterion},
it will suffice to show that the map $\chi$ is a universal weak equivalence
of $\SS_{/\frak{Op}}^{+}$. According to Lemma \ref{lem:K^=00005Camalg_approx},
it will suffice to prove the following:
\begin{enumerate}
\item The simplicial set $\cal O\times_{\pr{K^{\p\rcone}}^{\rcone}}K'$
is a Kan complex, and the refraction map $\chi$ restricts to a homotopy
equivalence 
\[
\alpha:\cal O\times_{\pr{K^{\p\rcone}}^{\rcone}}K'\xrightarrow{\simeq}\cal O\times_{\pr{K^{\p\rcone}}}\{\infty\}
\]
of Kan complexes.
\item The functor $\cal O^{\t}\times_{K^{\p\rcone}}K'\to N\pr{\Fin_{\ast}}$
is conservative.
\item For each $n\geq0$ and each object $X\in\cal O_{\inp n}^{\t}$, the
map 
\[
\gamma:\pr{\pr{\cal O^{\t}\times_{K^{\p\rcone}}K'}_{/X}}^{\simeq}\to\pr{\pr{N\pr{\Fin_{\ast}}}_{/\inp n}}^{\simeq}
\]
 is a trivial fibration.
\end{enumerate}

We begin with (1). By the definition of bundles, the functor $\cal O\to K^{\p\rcone}$
is a cocartesian fibration. Moreover, the fibers of the map $\cal O\to K^{\p\rcone}$
are Kan complexes (Remark \ref{rem:A24cmain}), so it is a left fibration.
Since $K'$ is a Kan complex, it follows that $\cal O\times_{K^{\p\rcone}}K'$
is a Kan complex. By Remark \ref{rem:A24cmain}, we can identify $\alpha$
with the refraction map of the $K^{\p\rcone}$-bundle classified by
the composite
\[
K^{\p\rcone}=\pr{\cal D_{/K}}^{\rcone}\to\cal S\xrightarrow{\pr -^{\amalg}}\Op_{\infty}\xrightarrow{\pr -_{\inp 1}}\Cat_{\infty},
\]
where $\pr -_{\inp 1}$ denotes the functor which assigns to each
$\infty$-operad $\cal O^{\t}$ the fiber $\cal O_{\inp 1}^{\t}=\cal O^{\t}\times_{N\pr{\Fin_{\ast}}}\{\inp 1\}$.
But the composite $\pr -_{\inp 1}\circ\pr -^{\amalg}:\cal S\to\Cat_{\infty}$
is just the inclusion, so we are reduced to showing that the inclusion
$\cal S\to\Cat_{\infty}$ is a left Kan extension of its restriction
to $\cal D$. This is clear, because the functor $\cal S\to\Cat_{\infty}$
is a left adjoint and the identity functor of $\cal S$ is a left
Kan extension of its restriction to $\cal D$ \cite[Lemma 5.1.5.3]{HTT}. 

Next, we prove (2). Since the $K'$-bundle $\cal O^{\t}\times_{K^{\p\rcone}}K'$
is equivalent to the terminal $K'$-bundle $K'\times N\pr{\Fin_{\ast}}$
(because the constant diagram at $N\pr{\Fin_{\ast}}$ is a terminal
object of $\Fun\pr{K',\Op_{\infty}}$), it suffices to show that the
functor $K'\times N\pr{\Fin_{\ast}}\to N\pr{\Fin_{\ast}}$ is conservative,
which is obvious.

Finally, we prove (3). As in (2), it will suffice to show that, for
each object $\pr{v,\inp n}\in K'\times N\pr{\Fin_{\ast}}$, the map
\[
\pr{\pr{K'\times N\pr{\Fin_{\ast}}}_{/\pr{v,\inp n}}}^{\simeq}=K'_{/v}\times\pr{N\pr{\Fin_{\ast}}_{/\inp n}}^{\simeq}\to\pr{N\pr{\Fin_{\ast}}_{/\inp n}}^{\simeq}
\]
is a trivial fibration. This is clear, since $K'_{/v}$ is a contractible
Kan complex. The proof is now complete.
\end{proof}

\newcommand{\etalchar}[1]{$^{#1}$}
\providecommand{\bysame}{\leavevmode\hbox to3em{\hrulefill}\thinspace}
\providecommand{\MR}{\relax\ifhmode\unskip\space\fi MR }
\providecommand{\MRhref}[2]{%
  \href{http://www.ams.org/mathscinet-getitem?mr=#1}{#2}
}
\providecommand{\href}[2]{#2}

\end{document}